\title{Quasi-Optimal Interpolation of Gradients and Vector-Fields on Protected Delaunay Meshes in $\mathbb{R}$\lowercase{$^d$}} 
\titlerunning{Quasi-Optimal Interpolation on Protected Delaunay Meshes in $\mathbb{R}$\lowercase{$^d$}} 
\author{David M. Williams\footnote{Distribution Statement A: Approved for public release. Distribution is unlimited.}}{Department of Mechanical Engineering, Pennsylvania State University, University Park, Pennsylvania, 16801, United States of America}{david.m.williams@psu.edu}{https://orcid.org/0009-0009-9137-1416}{This researcher was funded by the United States Naval Research Laboratory (NRL) under grant number N00173-22-2-C008. In turn, the NRL grant itself was funded by Steven Martens, Program Officer for the Power, Propulsion and Thermal Management Program, Code 35, in the United States Office of Naval Research, and by Saikat Dey, Acoustics Division Theoretical and Numerical Techniques Section Head, NRL.}
\author{Mathijs Wintraecken}{INRIA Sophia Antipolis - Méditerranée, 06902 Sophia Antipolis, France, and Université Côte d’Azur, France}{mathijs.wintraecken@inria.fr}{https://orcid.org/0000-0002-7472-2220}{This researcher was funded by the French National Research Agency (ANR) under grant StratMesh and the welcome package from IDEX of the Universit{\'e} C{\^o}te d'Azur. }
\authorrunning{D. M. Williams and M. Wintraecken} 
\keywords{Protected Delaunay, Optimal interpolation, High-order, Higher-dimensions} 
\begin{document}

\maketitle

\begin{abstract}
    There are very few mathematical results governing the interpolation of functions or their gradients on Delaunay meshes in more than two dimensions. Unfortunately, the standard techniques for proving optimal interpolation properties are often limited to triangular meshes. Furthermore, the results which do exist, are tailored towards interpolation with piecewise linear polynomials. In fact, we are unaware of \emph{any} results which govern the high-order, piecewise polynomial interpolation of functions or their gradients on Delaunay meshes. In order to address this issue, we prove that quasi-optimal, high-order, piecewise polynomial gradient interpolation can be successfully achieved on 
    \emph{protected} Delaunay meshes.  In addition, we generalize our analysis beyond gradient interpolation, and prove quasi-optimal interpolation properties for sufficiently-smooth vector fields. Throughout the paper, we use the words `quasi-optimal', because the quality of interpolation depends (in part) on the minimum thickness of simplicies in the mesh. Fortunately, the minimum thickness can be precisely controlled on protected Delaunay meshes in $\mathbb{R}^d$. Furthermore, the current best mathematical estimates for minimum thickness have been obtained on such meshes. In this sense, the proposed interpolation is optimal, although, we acknowledge that future work may reveal an alternative Delaunay meshing strategy with better control over the minimum thickness. With this caveat in mind, we refer to our interpolation on protected Delaunay meshes as quasi-optimal.
\end{abstract}

\section{Introduction}

The primary purpose of this article is to motivate the construction of \emph{protected} Delaunay meshes in higher dimensions, $d>2$. In order to avoid confusion, we will refer to traditional Delaunay meshes which satisfy an empty-hypersphere criterion as \emph{standard} Delaunay meshes. In contrast, a \emph{protected} Delaunay mesh satisfies a modified empty-hypersphere criterion, where the hypersphere of each simplex is augmented by a spherical-buffer region. This buffer region is formed by taking the original hypersphere with radius $R$, and subtracting its volume from an augmented hypersphere with radius $R + r$. Here, we insist that $r \geq \delta$, for some $\delta > 0$. In this context, the quantity $\delta$ is called the \emph{protection}. Broadly speaking, our goal is to make $\delta$ as large as possible, as this has two positive ramifications: (i) it increases the minimum size of slivers in the Delaunay mesh, and (ii) it reduces the sensitivity of the Delaunay mesh to the locations of its points. In this work, we are primarily interested in the first benefit, as we can directly reduce the errors of gradient or vector-field interpolation by increasing the minimum size of slivers. Of course, in a \emph{standard} Delaunay mesh without protection, the thickness of a sliver can become arbitrarily close to zero, (for $d > 2$). Fortunately, in cases where the protection is non-zero, we obtain fatter simplices whose thickness is bounded away from zero. This fact was established in the pioneering work of Boissonnat, Dyer, and Ghosh~\cite{boissonnat2013stability}. More precisely, they showed that slivers naturally arise from pathological configurations of $d+2$ vertices which are nearly co-spherical. These pathological configurations can be avoided by carefully perturbing the mesh points to facilitate the construction of a protected Delaunay mesh. A computationally expensive procedure for perturbing the points has been proposed in~\cite{boissonnat2014delaunay}. In addition, a detailed summary of protected Delaunay meshes in $\mathbb{R}^d$ appears in~\cite{boissonnat2018geometric}.

Now, having established the concept of a protected Delaunay mesh, let us expand our discussion in order to present some broader perspectives on the entire field of Delaunay meshing. Generally speaking, standard Delaunay meshes have a very good reputation among scientists and engineers. This positive reputation has been documented in many places, including the excellent textbooks of Cheng, Dey, and Shewchuk~\cite{cheng2013delaunay}, and Borouchaki and George~\cite{borouchaki2017meshing}. However, in our opinion, the reputation of these meshes is mostly based on their optimality properties in $\mathbb{R}^2$. Unfortunately, there is a weaker justification for using standard Delaunay meshes in higher dimensions. In particular, Delaunay triangulations of nets (well-spaced point sets) still contain slivers in dimensions higher than $2$, while in 2D the quality of simplices in a Delaunay triangulation of a net is lower bounded (as mentioned previously).  
In the next section, we will provide a short review of optimality properties of standard Delaunay meshes, and identify areas for potential improvement.  

\subsection{Background: Optimality of Standard Delaunay Meshes}

There are many optimality results for standard Delaunay triangulations in $\mathbb{R}^2$. For example, Sibson~\cite{sibson1978locally} proved that a standard Delaunay triangulation of a point set $S$ is guaranteed to maximize the minimum-interior angle of its triangles, relative to any other triangulation of the same points. In addition, Musin~\cite{musin1993delaunay,musin1997properties} proved that the standard Delaunay triangulation of $S$ minimizes the average circumradius of triangles in the triangulation. In a similar fashion, Lambert~\cite{lambert1994delaunay} proved that the standard Delaunay triangulation of $S$ minimizes the average inradius. Furthermore, Musin~\cite{musin1995index} proved that the standard Delaunay triangulation minimizes the harmonic index functional, where the functional is the sum over each triangle of the squared edge lengths divided by the triangle area. In more recent work, Musin~\cite{musin2010optimality} conjectured that the mean radius functional and the $D$ functional are minimized on standard Delaunay triangulations. Here, the mean radius functional is an area-weighted sum of the squared circumradii, and the $D$ functional is an area-weighted sum of the squared distance between the barycenter and circumcenter of each triangle. Following this work, Edelsbrunner et al.~\cite{edelsbrunner2017voronoi} proved that the mean radius functional is minimized on standard Delaunay triangulations. In addition, they showed that the Voronoi functional is maximized on the same triangulations. For the sake of brevity, the precise formulation of the Voronoi functional will not be described here; the interested reader is encouraged to consult~\cite{akopyan2009extremal} for details. In addition, a detailed summary of the optimality properties of standard Delaunay triangulations appears in Sierra's thesis~\cite{sierra2021optimality}. 

Most of the results above, govern the shape-regularity of triangles in a standard Delaunay triangulation. Essentially, these results ensure that the triangles resemble equilateral triangles, as much as possible. 

One may also obtain results which directly predict the interpolation or approximation accuracy of a standard Delaunay triangulation. In particular, Rippa~\cite{rippa1990minimal} and Powar~\cite{powar1992minimal} proved that the piecewise linear interpolations of $H^1$ functions have minimal \emph{roughness} on a standard Delaunay triangulation. Here, the roughness of the linear interpolation is defined as the integral of the squared magnitude of the gradient. This quantity naturally arises when a classical finite element method is applied to elliptic problems in $\mathbb{R}^2$. In~\cite{rippa1990minimum}, Rippa and Schiff leveraged the roughness result of~\cite{rippa1990minimal}, and proved that standard Delaunay triangulations minimize the solution error for simple elliptic problems. Thereafter, Shewchuk~\cite{shewchuk2002} performed an exhaustive study of  piecewise linear interpolation on generic triangular and tetrahedral meshes. Here, Shewchuk presented techniques for improving interpolation error on meshes that are not necessarily Delaunay.

Regrettably, there are few optimality properties for standard Delaunay meshes in $\mathbb{R}^{d}$ when $d > 2$. In~\cite{rajan94optimality}, Rajan proved that the  standard Delaunay mesh minimizes a functional of the weighted sum of the squared edge lengths, (see Theorem 1 of~\cite{rajan94optimality}). In addition, Rajan proved that the standard Delaunay mesh minimizes the maximum, min-containment radius of simplices in the mesh, (see Theorem 2 of~\cite{rajan94optimality}). These results are somewhat abstract, but they can easily be clarified with appropriate examples. In particular, Rajan's functional in $\mathbb{R}^4$ is the sum over each 4-simplex of the hypervolume-weighted squared edge lengths of the simplex. Furthermore, the min-containment radius of a 4-simplex corresponds to the hypersphere of minimum radius which contains the simplex. This latter quantity is often called the \emph{enclosing} hypersphere or ball~\cite{cheng2013delaunay}.

Following the work of Rajan, Waldron~\cite{waldron1998error} proved that a standard Delaunay mesh minimizes the infinity error for the piecewise linear interpolation of a multivariate function with pointwise-bounded second derivatives, (see Theorem 3.1 of~\cite{waldron1998error}). We note that Waldron's work implicitly leverages the min-containment radius result of Rajan~\cite{rajan94optimality}. 

In addition, one can prove that the standard Delaunay mesh provides optimal piecewise linear interpolation of the quadratic function, $\left|\bm{x} \right|^{2} + \bm{a}\cdot \bm{x} + b$, where $\bm{x}$ is a generic point in $\mathbb{R}^d$, $\bm{a}\in\mathbb{R}^d$, and $b \in \mathbb{R}$~\cite{cheng2013delaunay}. For this function, the standard Delaunay mesh minimizes the error in the $L_p$-norm for $p \geq 1$. This result has been leveraged in order to construct objective functions for \emph{optimal Delaunay triangulations}, (ODTs)---see the pioneering work of Chen and Xu~\cite{chen2004optimal}. In particular, ODTs are defined based on an energy functional or objective function which takes a mesh as input. Let  $f: \bm{x} \rightarrow |\bm{x}|^2$ be the parabola. 
A given mesh $\mathcal{T}$, induces a piecewise linear interpolation $f_{\mathrm{pl}}$, which coincides with $f$ on the vertices of $\mathcal{T}$ and is a linear interpolation on each simplex. The energy functional (objective function) is now defined as the integrated error that the PL-interpolation makes, that is $\mathcal{F}_{\textrm{ODT}} (\mathcal{T}) = \left\|f - f_{\mathrm{pl}}\right\|_{L^{1}(\Omega)}^{2}$. An optimal Delaunay triangulation is a mesh that minimizes this functional within the class of meshes with the same number of vertices. We stress that this means that both the position of the vertices and the combinatorics of the mesh are not fixed in this optimization.
However, because for a fixed set of vertices the Delaunay triangulation minimizes $\mathcal{F}_{\textrm{ODT}} (\mathcal{T})$, the result is always a Delaunay triangulation (assuming the vertices are in general position).  
We note that other objective functions can be used as alternatives to $\mathcal{F}_{\textrm{ODT}} (\mathcal{T})$, including functions which minimize the hypervolume-weighted sum of the edge lengths~\cite{chen2011efficient}, in accordance with Rajan's result~\cite{rajan94optimality}. From our perspective, the only issues with the ODT approach are, (a) the lack of theoretical guarantees on the minimum size of slivers, and (b) the inherent focus on piecewise linear interpolation.     

Finally, we note that Musin~\cite{musin1997properties} proved that a parabolic functional is minimized on standard Delaunay meshes. This functional consists of a hypervolume-weighted sum over each simplex of the squared vertex locations. While this is an interesting result, we are presently unaware of how it can be used in practical applications. 

\subsection{Summary of Existing Literature and New Contributions}

The best work on the optimality of standard Delaunay meshes in $\mathbb{R}^d$ appears to be that of Rajan~\cite{rajan94optimality}, Waldron~\cite{waldron1998error}, Chen, Xu, and coworkers~\cite{chen2004optimal,chen2011efficient}, and Musin~\cite{musin1997properties}. In particular, the work of Waldron~\cite{waldron1998error} guarantees that standard Delaunay meshes minimize the pointwise, piecewise linear, interpolation error of functions with pointwise-bounded second derivatives. Unfortunately, this work is incomplete, as it does not apply to interpolation with high-order, piecewise polynomial functions. 

In the current paper, we prove a new set of results, which establish the quasi-optimality of gradient interpolation on protected Delaunay meshes in $\mathbb{R}^d$. Our results are more general than previous results, as they are applicable to interpolation with high-order, piecewise polynomial functions. 

Furthermore, we observe that (by definition) the gradient of a scalar function is a vector field. With this in mind, we extend our results which govern gradient interpolation in order to establish the quasi-optimality of vector-field interpolation on protected Delaunay meshes. 

\subsection{Paper Approach and Outline}

There are many possible approaches for analyzing the accuracy of high-order interpolation. In this work, we proceed in a straightforward fashion, and leverage vector calculus in conjunction with classical interpolation theory for polynomial functions. Briefly, our approach involves extending the notion of roughness, originally introduced by Rippa~\cite{rippa1990minimal} for gradients in $\mathbb{R}^2$, to the case of $\mathbb{R}^d$. Thereafter, we use this notion to obtain a set of results which govern the quasi-optimality of gradient interpolation and vector-field interpolation on protected Delaunay meshes. 

The format of the paper is as follows. In section~\ref{prelim_section}, we motivate the present work by introducing a canonical, elliptic problem in $\mathbb{R}^4$. In addition, we expand the definition of roughness, originally introduced by Rippa~\cite{rippa1990minimal}, into higher dimensions. In section~\ref{theory_section}, we present  the new theoretical results that govern the interpolation of gradients on meshes in $\mathbb{R}^d$. Here, we prove the quasi-optimality of protected Delaunay meshes for this purpose.  In section~\ref{vector_section}, we extend the results of section~\ref{theory_section}, in order to establish quasi-optimal interpolation properties for $L_2$-vector fields. Finally, section~\ref{conclusion_section} contains concluding remarks and suggestions for additional research.

\section{Preliminaries} \label{prelim_section}

In this section, we define some important notation. Thereafter, we introduce an example problem in $\mathbb{R}^4$. Next, we define the notion of roughness for this problem, and explain the relationship between roughness and the solution error. Lastly, we provide a precise definition for roughness in any number of dimensions.

\subsection{Notation}
Consider a simply-connected, polytopal domain $\Omega \subset \mathbb{R}^d$. On this domain, we can define the Sobolev space $L_2(\Omega)$ and its associated norm $\left\| \cdot \right\|_{L_{2}(\Omega)}$ as follows
\begin{align*}
    L_2(\Omega) &= \left\{f \, \Bigg| \, \int_{\Omega} f^2 \, dx_1 dx_2 \cdots d_{x_d} < \infty  \right\}, \qquad \left\| f \right\|_{L_{2}(\Omega)} &= \left[\int_{\Omega} f^2 \, dx_1 dx_2 \cdots d_{x_d} \right]^{1/2},
\end{align*}
where $f = f(\bm{x})= f(x_1, x_2, \ldots, x_d)$ is a scalar function. The vector-version of this space is associated with the following norm
\begin{align*}
    \left\| \bm{f} \right\|_{L_{2}(\Omega)} = \left[\int_{\Omega} \bm{f} \cdot \bm{f} \, dx_1 dx_2 \cdots d_{x_d}\right]^{1/2},   
\end{align*}
where $\bm{f} = \bm{f}(\bm{x}) = \bm{f}(x_1, x_2, \ldots, x_d) \in \mathbb{R}^{d}$ is a vector-valued function.

We can also define the following Sobolev spaces
\begin{align*}
    H^{1}(\Omega) &= \left\{f \in L_{2}(\Omega) \, |  D^{\bm{\alpha}} f \in L_{2}(\Omega), |\bm{\alpha}| \leq 1 \right\}, \qquad H^{1}_{0}(\Omega) &= \left\{f \in H^{1}(\Omega) \, | \, f = 0 \quad \mathrm{on} \quad \partial \Omega \right\},
\end{align*}
where
\begin{align*}
    D^{\bm{\alpha}} v &= \frac{\partial^{|\bm{\alpha}|}f}{\partial x^{\alpha_1} \partial x^{\alpha_2} \cdots \partial x^{\alpha_d}}, \qquad \bm{\alpha} = \left(\alpha_1, \alpha_2, \ldots, \alpha_d\right) \in  \mathbb{N}^d, \qquad |\bm{\alpha}| = \alpha_1 + \alpha_2 + \cdots + \alpha_d.
\end{align*}
Evidently, the associated norms are
\begin{align*}
    \left\| f \right\|_{H^{1}(\Omega)} &= \left[\int_{\Omega} \left( f^2 + \nabla f \cdot \nabla f \right) dx_1 dx_2 \cdots d_{x_d}  \right]^{1/2}, \quad \left\| f \right\|_{H^{1}_{0}(\Omega)} = \left[\int_{\Omega} \left( \nabla f \cdot \nabla f \right) dx_1 dx_2 \cdots d_{x_d}  \right]^{1/2},
\end{align*}
where $\nabla = \left(\frac{\partial}{\partial x_1}, \frac{\partial}{\partial x_2}, \ldots, \frac{\partial}{\partial x_d}  \right)$.

\subsection{Elliptic Problem in 4D} \label{elliptic_example}

Now, consider a simply-connected, polytopal domain $\Omega \subset \mathbb{R}^4$. We are interested in solving the following elliptic problem on $\Omega$
\begin{align}
    -\Delta u &= f, \qquad \text{in} \quad \Omega, \label{elliptic_strong} \\
   \nonumber u &= 0, \qquad \text{on} \quad \partial \Omega,
\end{align}
where $u$ is the twice-differentiable solution that vanishes on the boundary $\partial \Omega$, $f$ is a forcing function in $L_{2}(\Omega)$, and $\Delta = \nabla \cdot \nabla \left( \cdot \right)$ is the four-dimensional Laplacian. The four-dimensional gradient operator is given by $\nabla = \left(\frac{\partial}{\partial x}, \frac{\partial}{\partial y}, \frac{\partial}{\partial z}, \frac{\partial}{\partial w} \right)$.

We can formulate a finite element method for solving Eq.~\eqref{elliptic_strong} by replacing $u$ with $u_h$, multiplying by the test function $v_h$, integrating over the domain $\Omega$, and integrating by parts as follows
\begin{align*}
    &\int_{\Omega} \nabla u_h \cdot \nabla v_h \, dV - \int_{\partial \Omega} v_h \frac{\partial u_h}{\partial x} n_x \, dy dz dw - \int_{\partial \Omega} v_h \frac{\partial u_h}{\partial y} n_y \, dx dz dw \\[1.0ex]
    &- \int_{\partial \Omega} v_h \frac{\partial u_h}{\partial z} n_z \, dx dy dw - \int_{\partial \Omega} v_h \frac{\partial u_h}{\partial w} n_w \, dx dy dz = \int_{\Omega}  f v_h \, dV,
\end{align*}
where $u_h, v_h \in H^{1}(\Omega)$ and $dV = dx dy dz dw$. Next, boundary conditions can be enforced by choosing $u_h, v_h \in V_h \subset H^{1}_{0}(\Omega)$ so that
\begin{align*}
    \int_{\Omega} \nabla u_h \cdot \nabla v_h \, dV  = \int_{\Omega} f v_h \, dV.
\end{align*}
We can rewrite the expression above in terms of more familiar notation
\begin{align}
    a_h(u_h,v_h) = L_h(v_h), \label{elliptic_fem}
\end{align}
where 
\begin{align*}
    a_h(u_h,v_h) &\equiv \int_{\Omega} \nabla u_h \cdot \nabla v_h \, dV
    \\
    &= \int_{\Omega} \left( \frac{\partial u_h}{\partial x}  \frac{\partial v_h}{\partial x} + \frac{\partial u_h}{\partial y}  \frac{\partial v_h}{\partial y} + \frac{\partial u_h}{\partial z}  \frac{\partial v_h}{\partial z} + \frac{\partial u_h}{\partial w}  \frac{\partial v_h}{\partial w} \right) dx dy dz dw, \\[1.0ex]
    L_h(v_h) &\equiv \int_{\Omega} f v_h \, dV.
\end{align*}
It is well known that Eq.~\eqref{elliptic_fem} has a unique solution when $a_h$ is symmetric, bilinear, and governed by the following constraints
\begin{align*}
    | a_h(u_h,v_h) | \leq \sigma \left\| u_h \right\| \left\| v_h \right\|, \qquad \tau \left\| v_h \right\|^2 \leq a_h(v_h,v_h), 
\end{align*}
where $\sigma$ and $\tau$ are positive constants, and 
\begin{align*}
    \left\|v_h \right\| &\equiv \left[ \int_{\Omega} \left(v_{h}^{2} + \nabla v_h \cdot \nabla v_h \right) dV \right]^{1/2} \\[1.0ex]
    &= \left[ \int_{\Omega} \left(v_{h}^{2} + \left(\frac{\partial v_h}{\partial x} \right)^{2} + \left(\frac{\partial v_h}{\partial y} \right)^{2} + \left(\frac{\partial v_h}{\partial z} \right)^{2} + \left(\frac{\partial v_h}{\partial w} \right)^{2} \right) dV \right]^{1/2}.
\end{align*}
Next, we can define the energy norm
\begin{align*}
    \left\| v_h \right\|_{a} \equiv \sqrt{a_h(v_h,v_h)}.
\end{align*}
In accordance with standard elliptical theory (see~\cite{sayas2019variational}, Lemma 2.2), we can introduce the following energy functional
\begin{align}
    J(v_h) \equiv a_h(v_h,v_h) - 2 L_h(v_h). \label{jdef}
\end{align}
Here, $J(v_h)$ is the integral of the \emph{Lagrangian}. The minimum of this functional is the solution of Eq.~\eqref{elliptic_fem}. More precisely
\begin{align*}
    J(u_h) = \min_{v_h \in V_h} J(v_h).
\end{align*}
Now, we are ready to introduce the notion of roughness, and its relationship to solution error. In particular, suppose that we create a pair of meshes, $\mathcal{T}_1$ and $\mathcal{T}_2$, for our domain $\Omega$. These meshes are distinct, and do not necessarily possess the same number of simplices. Due to the bilinearity and symmetry of $a_h$, then the following equality holds
\begin{align}
    \left\|u - u_{h,1} \right\|^{2}_{a} = J(u_{h,1}) - J(u_{h,2}) + \left\|u - u_{h,2} \right\|^{2}_{a}.
\end{align}
Naturally, if we choose $\mathcal{T}_1$ such that
\begin{align*}
    J(u_{h,1}) \leq J(u_{h,2}),
\end{align*}
then it follows that
\begin{align*}
    \left\|u - u_{h,1} \right\|_{a} \leq \left\|u - u_{h,2} \right\|_{a}.
\end{align*}
Therefore, we seek to minimize $J(u_h)$ in order to minimize the error as measured by the energy norm $\left\| \cdot \right\|_{a}$. In turn, due to the definition of $J(u_h)$ (see Eq.~\eqref{jdef}), we seek meshes $\mathcal{T}_1$ that minimize the quantity $a_h(u_h,u_h)$. This quantity is often called the \emph{roughness} of the mesh. 

\subsection{Roughness in \texorpdfstring{$\mathbb{R}^d$}{Rd}}

It turns out that the measure of roughness that we identified in the previous section can be extended to any number of dimensions $d$. In particular, one may define
\begin{align}
    a(v,v) \equiv \int_{\Omega} \sum_{m=1}^{d} \left(\frac{\partial v}{\partial x_{m}}\right)^{2} \, dx_{1} dx_{2} \cdots dx_{d},
\end{align}
as the measure of roughness in $\mathbb{R}^d$. Here, we have omitted the subscript $h$ from the quantities $a$ and $v$ in order to simplify the notation. We note that the roughness functional (above) is merely the square of the norm on $H^{1}_{0}(\Omega)$, introduced previously. 

\section{Theoretical Results: Gradient Interpolation} \label{theory_section}

 \subsection{Mesh Properties}

Let us introduce a finite set of points $S$ 
\begin{align*}
    S = \left\{\bm{p}_{1}, \bm{p}_{2}, \ldots, \bm{p}_{l}, \ldots, \bm{p}_{N_{v}} \right\}, \qquad 1\leq  l \leq N_v,
\end{align*}
where the cardinality $|S| = N_v$, and each point $\bm{p}_{l}$ is contained within a bounded, simply-connected domain $\Omega \in \mathbb{R}^{d}$. For the sake of simplicity, we let $\mathrm{conv}(S) = \Omega$. Furthermore, we assume that $S$ is a $(\varepsilon, \overline{\eta})$-net, which satisfies the following conditions:
\begin{align}
    \forall \bm{x} \in \Omega, \exists \bm{p}_{l} \in S&: \qquad \left| \bm{x} - \bm{p}_{l} \right| \leq \varepsilon, \label{density_condition} \\[1.0ex]
    \forall \bm{p}_{l}, \bm{p}_{n} \in S&: \qquad \left| \bm{p}_{l} - \bm{p}_{n} \right| \geq \eta, \label{separation_condition}
\end{align}
where $\varepsilon > 0$, $\eta > 0$, and $\overline{\eta} = \eta/\varepsilon$. Together, Eqs.~\eqref{density_condition} and \eqref{separation_condition} control the density of the points.

Now, let us introduce a generic mesh $\mathcal{T}$ whose vertices are the points of $S$, and whose elements $K$ are non-overlapping.  We assume that the mesh is a triangulation, and that the union of all elements has the same hypervolume as the domain itself. Furthermore, let us assume that each $K$ is a $d$-simplex equipped with $(d+1)$-facets of dimension $d-1$. In addition, we assume that each facet is either a boundary facet (with precisely one $d$-simplex neighbor) or an interior facet (with precisely two $d$-simplex neighbors). As a result, the mesh does not contain any hanging nodes. In accordance with these assumptions (above), we say that the mesh is a \emph{pseudo manifold}: i.e.~a pure simplicial $d$-complex which is $d$-connected, and for which each $(d-1)$-simplex has exactly one or two $d$-simplex neighbors. 

The $d+1$ vertices of each simplex $K$ are denoted by $\bm{p}_{K,i}$, where $i = 1, \ldots, d+1$. The coordinates of each vertex are given by
\begin{align}
    \bm{p}_{K,i} = \left(p_{K,i}^{1}, \, p_{K,i}^{2}, \ldots, \, p_{K,i}^{m}, \ldots, \, p_{K,i}^{d}\right)^{T},
\end{align}
where $1 \leq m \leq d$. The absolute value of a vertex can be defined in terms of the absolute values of its components, as follows
\begin{align}
    \mathrm{abs}(\bm{p}_{K,i}) = \left(\left|p_{K,i}^{1}\right|, \, \left|p_{K,i}^{2}\right|, \ldots, \, \left|p_{K,i}^{m}\right|, \ldots, \, \left|p_{K,i}^{d}\right|\right)^{T}.
\end{align}
Note that $|\bm{p}_{K,i}|$ and $\mathrm{abs}(\bm{p}_{K,i})$ are not equivalent quantities, as one is the scalar magnitude of a $d$-vector, and the other is a $d$-vector of absolute values. 

Lastly, we can define the $d(d+1)/2$ edges of each simplex $K$ as follows
\begin{align}
    \bm{p}_{K,ij} &= \bm{p}_{K,j} - \bm{p}_{K,i} =\left(p_{K,ij}^{1}, \, p_{K,ij}^{2}, \ldots, \, p_{K,ij}^{m}, \ldots, \, p_{K,ij}^{d}\right)^{T},
\end{align}
where $1\leq i \leq d+1$, $1\leq j \leq i-1$, and $1 \leq m \leq d$.

\subsection{The Sizing Function}

Next, we introduce a sizing function $\mathcal{D}(\bm{x}) = \mathcal{D}(x_1, x_2, \ldots, x_d) >0$. We assume that this sizing function has units of length, and that the squared-reciprocal of this function is $C^2$-continuous over the domain~$\Omega$, i.e.
\begin{align*}
    \frac{1}{\mathcal{D}^2(\bm{x})} \in C^{2}(\Omega).
\end{align*}
%
%
We can relate the sizing function to the local mesh spacing, as follows
\begin{align}
   \label{size_constraint_one} &\min\left[\frac{1}{\mathcal{D}^{2}(\bm{p}_{K,1})}, \frac{1}{\mathcal{D}^{2}(\bm{p}_{K,2})}, \ldots, \frac{1}{\mathcal{D}^{2}(\bm{p}_{K,d+1})} \right] \leq \left(\frac{1}{\Delta(K)^2} + \zeta_{K}\right), \\[1.0ex] 
    \nonumber & \max\left[\frac{1}{\mathcal{D}^{2}(\bm{p}_{K,1})}, \frac{1}{\mathcal{D}^{2}(\bm{p}_{K,2})}, \ldots, \frac{1}{\mathcal{D}^{2}(\bm{p}_{K,d+1})} \right] \geq \left(\frac{1}{\Delta(K)^2} + \zeta_{K}\right),
\end{align}
where $\Delta(K)$ is the diameter of $K$ (the longest edge length of $K$), and where we choose $\zeta_{K} \in \mathbb{R}$ so that the inequalities above hold true. Let us interpret Eq.~\eqref{size_constraint_one} in more intuitive terms. Suppose that Eq.~\eqref{size_constraint_one} holds with $\zeta_{K} = 0$. This means that $1/\Delta(K)^2$ is bounded above and below by the minimum and maximum values of $1/\mathcal{D}^{2}(\bm{x})$ evaluated at the vertices of $K$. This indicates that the size of each element $K$ (as characterized by $\Delta(K)$) conforms to the specifications of the sizing field, $\mathcal{D}(\bm{x})$. Conversely, if $\zeta_K$ is non-zero, then $1/\Delta(K)^2$ is \emph{not} bounded above and below by the minimum and maximum values of $1/\mathcal{D}^{2}(\bm{x})$ evaluated at the vertices of $K$. In this sense, $\zeta_{K}$ characterizes the difference between the \emph{actual} mesh spacing and the \emph{suggested} mesh spacing provided by the sizing function. 

We may now construct a linear interpolant $\mathcal{L}_{[1/\mathcal{D}^2]}(\bm{x})$ which takes on the values of $1/\mathcal{D}^{2}(\bm{x})$ at the mesh vertices $\bm{p}_{l}$, such that
\begin{align}
    \mathcal{L}_{[1/\mathcal{D}^2]} (\bm{p}_{l}) = \frac{1}{\mathcal{D}^{2}(\bm{p}_{l})}, \qquad \forall l.
\end{align}
As a consequence of Eq.~\eqref{size_constraint_one} and the linearity of the interpolant, there exists at least one point $\bm{x}_{K} \in K$ such that
\begin{align}
    \mathcal{L}_{[1/\mathcal{D}^2]} (\bm{x}_{K}) = \frac{1}{\Delta(K)^{2}} + \zeta_{K},
    \label{size_constraint_two}
\end{align}
for each $K$. For the sake of additional clarity, we have included a figure, (Figure~\ref{fig:sizing}), which summarizes the role of the sizing function for a tetrahedron.
\begin{figure}[h!]
    \centering
    \includegraphics[width = 1.1\textwidth]{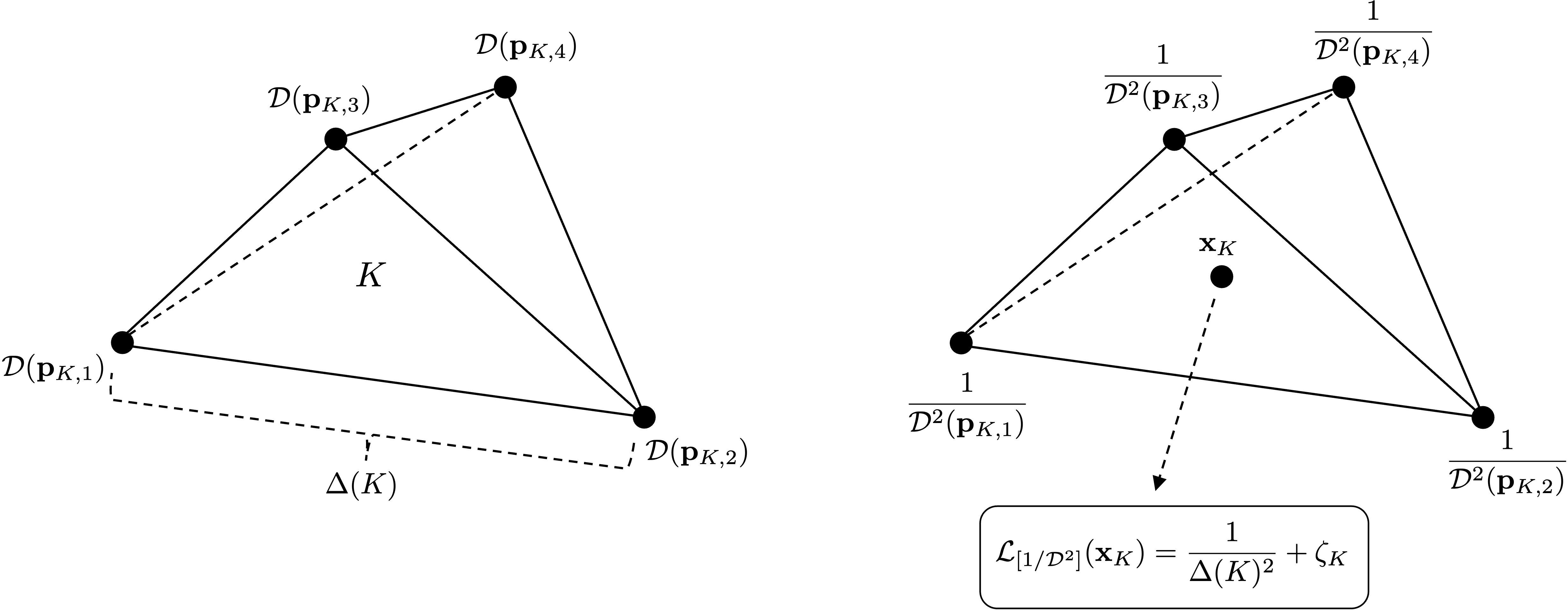}
    \caption{Left, the sizing function for a tetrahedron element $K$ evaluated at its vertices, $\bm{p}_{K,i}$. Here, the longest edge length between vertices is denoted by $\Delta(K)$. Right, the squared-reciprocal of the sizing function evaluated at the vertices, and its linear interpolant $\mathcal{L}_{[1/\mathcal{D}^2]}(\bm{x})$.}
    \label{fig:sizing}
\end{figure}

We are now ready to proceed with our analysis.

\subsection{Derivation of Results} \label{isotropic_theory_section}

In this section, our objective is to construct an error estimate for gradient interpolation on the mesh $\mathcal{T}$. Towards this end, we introduce  definitions for a \emph{roughness} functional and a gradient norm. Next, we prove that the roughness function is equivalent to the gradient norm. Thereafter, we construct a series of upper bounds for the roughness functional and the gradient norm, and then leverage these results to construct an upper bound for the error in the gradient norm. 

\begin{definition}[Roughness Functional]
Consider the following, non-negative functional of $v$ over the mesh $\mathcal{T}$
\begin{align}
    \Psi_{\mathcal{T}}\left(\nabla v\right) \equiv  \left[\sum_{K \in \mathcal{T}} \frac{1}{h_{K}^{2}} \int_{K} \sum_{i=1}^{d+1} \sum_{j=1}^{i-1}\left(\mathrm{abs}(\nabla v)\cdot \mathrm{abs}(\bm{p}_{K,ij}) \right)^{2} dV \right]^{1/2},
\end{align}
where we assume that $v$ is a piecewise-$H^1$-vector field on $\Omega$, and $h_{K}$ is a characteristic length scale associated with each $K$. The presence of the $\mathrm{abs}$ symbols above makes it easier for us to construct a lower bound for $\Psi_{\mathcal{T}}\left(\nabla v\right)$ in the work that follows.
\label{isotropic_functional}
\end{definition}

%
\begin{definition}[Gradient Norm]
Consider the following gradient norm of $v$ over the mesh $\mathcal{T}$
\begin{align}
    \left\| \nabla v \right\|_{L_{2}(\Omega)} &\equiv \left[ \sum_{K\in \mathcal{T}} \int_{K} \nabla v \cdot \nabla v \, dV \right]^{1/2},
\end{align}
where we assume that $v$ is a piecewise-$H^1$-vector field on $\Omega$.
\label{isotropic_gradient_norm}
\end{definition}

We are now ready to show that the roughness functional and the gradient norm are equivalent.

\begin{theorem}[Equivalence of the Roughness Functional and the Gradient Norm]
    The functional in Definition~\ref{isotropic_functional} and the gradient norm in Definition~\ref{isotropic_gradient_norm} are equivalent in the following sense
    \begin{align}
        C_1 \left\| \nabla v \right\|_{L_{2}(\Omega)} \leq \Psi_{\mathcal{T}}\left(\nabla v\right) \leq  C_2 \left\| \nabla v \right\|_{L_{2}(\Omega)},
        \label{functional_equivalence}
    \end{align}
    where $v$ resides in the space of piecewise-$H^1$-vector fields on $\Omega$, and $C_1$ and $C_2$ are constants that depend on the mesh $\mathcal{T}$,
    \begin{align}
        C_{1} &= \sqrt{\frac{d+1}{2d}} \min_{K} \left( \frac{\min_{s} \left[\mathrm{dist}\left(\bm{p}_{K,s}, \mathrm{aff}(\mathcal{F}_{K,s})\right)\right]}{\Delta(K)} \right), \label{sliver_constant} \\[1.0ex]
        C_{2} &= \max_{K} \left( \frac{1}{\Delta(K)} \sqrt{\sum_{i=1}^{d+1} \sum_{j=1}^{i-1}\left(\bm{p}_{K,ij}\cdot \bm{p}_{K,ij} \right) }\right). \label{coarse_constant}
    \end{align}
    Here, $\mathcal{F}_{K,s}$ is the facet opposite to the vertex $\bm{p}_{K,s}$ and $1 \leq s \leq d+1$.
    \label{functional_equivalence_lemma}
\end{theorem}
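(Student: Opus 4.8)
The plan is to reduce the asserted norm equivalence to a single pointwise, element-wise inequality of linear algebra. Since both $\Psi_{\mathcal T}(\nabla v)^2$ and $\|\nabla v\|_{L_2(\Omega)}^2$ are sums over $K\in\mathcal T$ of integrals over $K$, and since (when $v$ is vector-valued, so that $\nabla v$ is a matrix) the integrand of $\Psi$ on $K$ and $\nabla v\cdot\nabla v$ both split as sums over the rows of $\nabla v$ — each row being the gradient of a scalar component — it suffices to prove, for every $K$ and every fixed vector $\bm{g}\in\mathbb R^{d}$ (to be instantiated at a.e.\ $\bm{x}\in K$ as a row of $\nabla v(\bm{x})$),
\begin{align*}
  C_1(K)^2\,|\bm{g}|^2 \;\le\; \frac{1}{\Delta(K)^2}\sum_{i=1}^{d+1}\sum_{j=1}^{i-1}\bigl(\mathrm{abs}(\bm{g})\cdot\mathrm{abs}(\bm{p}_{K,ij})\bigr)^2 \;\le\; C_2(K)^2\,|\bm{g}|^2 ,
\end{align*}
where $C_1(K)$ and $C_2(K)$ are the per-element quantities under the $\min_K$ / $\max_K$ in \eqref{sliver_constant}--\eqref{coarse_constant} (I read $h_K=\Delta(K)$ in Definition~\ref{isotropic_functional}). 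Integrating this over $K$, summing over $\mathcal T$, replacing $C_1(K)$ by $\min_K C_1(K)=C_1$ on the left and $C_2(K)$ by $\max_K C_2(K)=C_2$ on the right, and taking square roots yields \eqref{functional_equivalence}.

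For the upper bound I would argue directly. Passing to componentwise absolute values leaves the Euclidean norm unchanged, so $|\mathrm{abs}(\bm{g})|=|\bm{g}|$ and $|\mathrm{abs}(\bm{p}_{K,ij})|=|\bm{p}_{K,ij}|$; Cauchy--Schwarz then gives $\bigl(\mathrm{abs}(\bm{g})\cdot\mathrm{abs}(\bm{p}_{K,ij})\bigr)^2\le|\bm{g}|^2\,(\bm{p}_{K,ij}\cdot\bm{p}_{K,ij})$. Summing over the $d(d+1)/2$ edges of $K$ and dividing by $\Delta(K)^2$ reproduces exactly $C_2(K)^2|\bm{g}|^2$.

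The lower bound is the crux. First, the $\mathrm{abs}$'s only help: $\mathrm{abs}(\bm{g})\cdot\mathrm{abs}(\bm{p}_{K,ij})\ge|\bm{g}\cdot\bm{p}_{K,ij}|$, so it is enough to bound $\sum_{i<j}(\bm{g}\cdot\bm{p}_{K,ij})^2$ from below. Writing $\bm{p}_{K,ij}=\bm{p}_{K,j}-\bm{p}_{K,i}$ and setting $y_i=\bm{g}\cdot\bm{p}_{K,i}$, the elementary identity $\sum_{i<j}(y_j-y_i)^2=(d+1)\sum_i y_i^2-\bigl(\sum_i y_i\bigr)^2=(d+1)\sum_i\bigl(\bm{g}\cdot(\bm{p}_{K,i}-\bm{c}_K)\bigr)^2$, with $\bm{c}_K$ the vertex centroid, reduces matters to a lower bound on the smallest eigenvalue of the scatter matrix $M_K=\sum_i(\bm{p}_{K,i}-\bm{c}_K)(\bm{p}_{K,i}-\bm{c}_K)^{T}$. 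To obtain it I would invoke the barycentric coordinate functions $\lambda_s$ of $K$: each is affine, $\nabla\lambda_s$ is a constant vector with $|\nabla\lambda_s|=1/\mathrm{dist}\bigl(\bm{p}_{K,s},\mathrm{aff}(\mathcal F_{K,s})\bigr)$, and differentiating the partition-of-unity identities $\sum_s\lambda_s\equiv1$ and $\sum_s\lambda_s(\bm{x})\bm{p}_{K,s}\equiv\bm{x}$ gives $\sum_s(\bm{p}_{K,s}-\bm{c}_K)\otimes\nabla\lambda_s=I$. Applying this to $\bm{g}$ and using Cauchy--Schwarz,
\begin{align*}
  |\bm{g}|^2=\sum_s(\nabla\lambda_s\cdot\bm{g})\bigl((\bm{p}_{K,s}-\bm{c}_K)\cdot\bm{g}\bigr)\le\Bigl(\sum_s(\nabla\lambda_s\cdot\bm{g})^2\Bigr)^{1/2}\bigl(\bm{g}^{T}M_K\bm{g}\bigr)^{1/2},
\end{align*}
and $\sum_s(\nabla\lambda_s\cdot\bm{g})^2\le(d+1)|\bm{g}|^2/\bigl(\min_s\mathrm{dist}(\bm{p}_{K,s},\mathrm{aff}(\mathcal F_{K,s}))\bigr)^2$, whence after rearranging $\bm{g}^{T}M_K\bm{g}\ge\bigl(\min_s\mathrm{dist}(\bm{p}_{K,s},\mathrm{aff}(\mathcal F_{K,s}))\bigr)^2|\bm{g}|^2/(d+1)$. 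Combining this with the displayed identity, using $(d+1)^{-1}\ge(2d)^{-1}$ for $d\ge1$, and dividing by $\Delta(K)^2$ produces the left inequality with the stated $C_1(K)=\sqrt{(d+1)/(2d)}\,\min_s\mathrm{dist}(\bm{p}_{K,s},\mathrm{aff}(\mathcal F_{K,s}))/\Delta(K)$ (in fact with a little room to spare). The only delicate point is precisely this lower bound on $\lambda_{\min}(M_K)$ — controlling how nearly degenerate the simplex is through its smallest altitude; the partition-of-unity identity is the device that converts ``small altitude'' into ``well-conditioned scatter matrix,'' and everything else is Cauchy--Schwarz together with the bookkeeping needed to pass from the per-element estimates to the global constants $C_1,C_2$.
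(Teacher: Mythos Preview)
Your proposal is correct, and for the upper bound it is identical to the paper's argument (Cauchy--Schwarz on each edge term, then pull out the maximum over $K$). For the lower bound, however, you take a genuinely different and considerably shorter route than the paper.

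The paper works \emph{coordinate by coordinate}: after the elementary inequality $(\mathrm{abs}(\nabla v)\cdot\mathrm{abs}(\bm{p}_{K,ij}))^2\ge\sum_m((\nabla v)^m)^2(p_{K,ij}^m)^2$, it must bound $\sum_{i<j}(p_{K,ij}^m)^2$ from below uniformly in the coordinate index $m$. To do this it develops a lengthy computation with Levi--Civita symbols, cofactor expansions of the hypervolume determinant, facet normal vectors, the identity $|K|/\max_s|\mathcal F_{K,s}|=\min_s\mathrm{dist}(\bm{p}_{K,s},\mathrm{aff}(\mathcal F_{K,s}))/d$, and finally the RMS--AM inequality; this produces exactly the factor $(d+1)/(2d)$. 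Your argument is \emph{coordinate-free}: you pass from $\mathrm{abs}(\cdot)\cdot\mathrm{abs}(\cdot)$ to the plain dot product, use the variance identity $\sum_{i<j}(\bm g\cdot\bm p_{K,ij})^2=(d+1)\,\bm g^{T}M_K\bm g$ with $M_K$ the vertex scatter matrix, and then bound $\lambda_{\min}(M_K)$ via the barycentric partition-of-unity relation $\sum_s(\bm p_{K,s}-\bm c_K)\otimes\nabla\lambda_s=I$ together with $|\nabla\lambda_s|=1/\mathrm{dist}(\bm p_{K,s},\mathrm{aff}(\mathcal F_{K,s}))$. This replaces roughly two pages of determinant manipulation by a single Cauchy--Schwarz step, and in fact yields the sharper per-element constant $\min_s\mathrm{dist}(\bm p_{K,s},\mathrm{aff}(\mathcal F_{K,s}))/\Delta(K)$ without the prefactor $\sqrt{(d+1)/(2d)}$; your remark ``with a little room to spare'' is exactly right. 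The paper's approach has the minor advantage of being entirely elementary (no barycentric machinery), but your route makes the geometry transparent: the minimum altitude controls $\lambda_{\min}(M_K)$ precisely because the barycentric gradients furnish a resolution of the identity whose norms are inverse altitudes.
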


\begin{proof}
    See Appendix~\ref{functional_equivalence_lemma_proof}.
\end{proof}

The mesh-dependent constants $C_1$ (Eq.~\eqref{sliver_constant}) and $C_2$ (Eq.~\eqref{coarse_constant}) appear to be new. In what follows, we will interpret their meanings.

\begin{remark}[Sliver-Detecting Constant]
     In order to interpret the constant $C_1$, it is helpful for us to first define the concept of \emph{thickness}. In accordance with~\cite{boissonnat2012stability}, the thickness of a $d$-simplex $K$ is given by
     \begin{align}
         \Xi(K) = \begin{cases}
             1 & \text{if} \; d = 0 \\
             \min_{s} \left( \frac{\mathrm{dist}\left(\bm{p}_{K,s}, \mathrm{aff}(\mathcal{F}_{K,s})\right)}{d \, \Delta(K)} \right) & \text{otherwise}.
         \end{cases}
     \end{align}
     Evidently, the minimum thickness, $\min_{K} (\Xi(K))$, will be small for meshes which contain sliver elements, and will be large in the absence of slivers.  We can rewrite $C_1$ in terms of $\Xi(K)$ as follows
     \begin{align}
         \label{sliver_constant_bound} C_1 &= \sqrt{\frac{d+1}{2d}} \min_{K} \left( \min_{s} \left[ \frac{\mathrm{dist}\left(\bm{p}_{K,s}, \mathrm{aff}(\mathcal{F}_{K,s})\right)}{d \, \Delta(K)} \right] d \right) = \sqrt{\frac{d(d+1)}{2}} \min_{K} \Xi(K).
     \end{align}
     Therefore, the constant $C_1$ is a direct measure of the minimum element thickness.
     \label{sliver_constant_remark}
\end{remark}

\begin{remark}[Protected Delaunay Meshes and Thickness]
     Broadly speaking, the minimum element thickness is difficult to control in $\mathbb{R}^d$, for arbitrary $d$. Fortunately, in Lemma 5.27 of~\cite{boissonnat2018geometric}, Boissonnat and coworkers proved that the thicknesses of simplices are bounded below as follows
     \begin{align}
         \Xi(K) \geq \frac{\delta^2}{8 d \varepsilon^2}. \label{thickness_bound}
     \end{align}
     Therefore, the element thickness depends quadratically on the protection, $\delta$.
     It is important to note that, with the exception of Eq.~\eqref{thickness_bound}, we are unaware of any results which provide rigorous mathematical guarantees for the thickness of simplices in an arbitrary number of dimensions. 

     Upon combining Eq.~\eqref{thickness_bound} with Eq.~\eqref{sliver_constant_bound}, we obtain
     \begin{align}
         C_1 \geq \sqrt{\frac{d+1}{2d}} 
         \left(\frac{\delta^2}{8 \varepsilon^2}\right).
     \end{align}
     Therefore, direct control of $C_1$ can be obtained on protected Delaunay meshes.
     \label{protected_delaunay_remark}
\end{remark}

\begin{remark}[Maximum Protection]
     In light of the results above, we seek a procedure for generating protected Delaunay meshes with a maximal level of protection $\delta$ for arbitrary $d$. The current best procedure for generating \emph{unstructured}, protected Delaunay meshes is given by~\cite{boissonnat2018geometric}. Here, the protection is guaranteed to be at least
     \begin{align}
         \delta \sim \mathcal{O}\left(\frac{1}{2^{d^2}}\right).
         \label{unstructured_protection}
     \end{align}
     In addition, the current best procedure for generating \emph{structured} protected Delaunay meshes is given by \emph{Coxeter reflection}~\cite{choudhary2020coxeter}. In particular, the Coxeter triangulations of type $\widetilde{A}_d$ are protected Delaunay triangulations, which are guaranteed to have the following protection
     \begin{align}
         \delta \sim \mathcal{O}\left(\frac{1}{d^2}\right).
         \label{structured_protection}
     \end{align}
     This level of protection is the largest of any known triangulation. In this sense, we argue that structured protected Delaunay meshes are best for maximizing the protection in $\mathbb{R}^d$, and thereby, maximizing the constant $C_1$.

     Finally, we note that the gap between the protection estimates for the unstructured and structured cases (Eqs.~\eqref{unstructured_protection} and \eqref{structured_protection}, respectively) can likely be improved. In particular, the protection estimate for the unstructured case decays exponentially with the number of dimensions $d$, and is therefore, an example of the `curse of dimensionality'. However, given the exceptional quality of the structured estimate, we are optimistic that the unstructured estimate can be improved. 
\end{remark}

\begin{remark}[Max Edge-Length Constant]
    The constant $C_2$ is easier to interpret than $C_1$. By inspection, $C_2$ is merely the square root of the maximum of the sum of squared edge lengths, normalized by $\Delta(K)$, and evaluated across all simplices in the mesh $\mathcal{T}$. A more precise interpretation of this constant is unnecessary in the context of the present work, as it does not appear in any of the subsequent estimates.   
\end{remark}
    
In what follows, we will introduce additional upper bounds for both the roughness functional and the gradient norm.

\begin{lemma}[Upper Bound for the Roughness Functional]
    The functional in Definition~\ref{isotropic_functional} is bounded above by the infinity norm of the gradient as follows
    \begin{align}
        \label{iso_upper_bound}
         \Psi_{\mathcal{T}}\left(\nabla v\right) \leq C_{3} \sqrt{\Theta} \left\| \nabla v \right\|_{L_{\infty}(\Omega)},
    \end{align}
    where $v$ resides in the intersection of $L_{\infty}$-vector fields and piecewise-$H^1$-vector fields over $\Omega$, $\Theta$ is a functional that depends on the mesh $\mathcal{T}$, and $C_3$ is a constant that also depends on the mesh 
    \begin{align}
        C_{3} &= \sqrt{\min \left[\frac{1}{2} \max_{K} \left( R_{K,\mathrm{min}}\right)^{2}  \left\| \frac{1}{\mathcal{D}^{2}} \right\|_{2,L_{\infty}(\Omega)} + \left\| \frac{1}{\mathcal{D}^{2}} \right\|_{L_{\infty}(\Omega)} + \max_{K} \left| \zeta_{K} \right| , \frac{1}{\eta^2} \right]}, \\[1.0ex]
        \Theta &= \sum_{K \in \mathcal{T}} \left(\sum_{i=1}^{d+1} \sum_{j=1}^{i-1}\left(\bm{p}_{K,ij}\cdot \bm{p}_{K,ij} \right) |K| \right).
    \end{align}
    Here, $R_{K,\min}$ is the min-containment radius.
    \label{iso_upper_bound_lemma}
\end{lemma}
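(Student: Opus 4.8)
The plan is to bound the integrand of $\Psi_{\mathcal{T}}(\nabla v)$ pointwise, pull the global constant $\|\nabla v\|_{L_{\infty}(\Omega)}$ out of every integral, and then absorb the per-element weight $1/h_{K}^{2}$ (which here equals $1/\Delta(K)^{2}$, consistent with Theorem~\ref{functional_equivalence_lemma}) into the constant $C_{3}$ by bounding $1/\Delta(K)^{2}$ from above uniformly in $K$. First I would apply the Cauchy--Schwarz inequality componentwise: since $\mathrm{abs}(\nabla v)\cdot\mathrm{abs}(\bm{p}_{K,ij})\le|\nabla v|\,|\bm{p}_{K,ij}|$ at every point, we get
\begin{align*}
\sum_{i=1}^{d+1}\sum_{j=1}^{i-1}\bigl(\mathrm{abs}(\nabla v)\cdot\mathrm{abs}(\bm{p}_{K,ij})\bigr)^{2} \;\le\; |\nabla v|^{2}\,\sum_{i=1}^{d+1}\sum_{j=1}^{i-1}\bigl(\bm{p}_{K,ij}\cdot\bm{p}_{K,ij}\bigr).
\end{align*}
Replacing $|\nabla v(\bm{x})|$ by $\|\nabla v\|_{L_{\infty}(\Omega)}$, integrating over $K$, and summing over $K$ against the weight $1/\Delta(K)^{2}$ then yields
\begin{align*}
\Psi_{\mathcal{T}}(\nabla v)^{2} \;\le\; \|\nabla v\|_{L_{\infty}(\Omega)}^{2}\sum_{K\in\mathcal{T}}\frac{1}{\Delta(K)^{2}}\Bigl(\sum_{i=1}^{d+1}\sum_{j=1}^{i-1}\bm{p}_{K,ij}\cdot\bm{p}_{K,ij}\Bigr)|K|,
\end{align*}
so it remains only to establish the uniform bound $1/\Delta(K)^{2}\le C_{3}^{2}$; once that is in hand the weighted sum collapses to $C_{3}^{2}\,\Theta\,\|\nabla v\|_{L_{\infty}(\Omega)}^{2}$ and the lemma follows on taking square roots.

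To prove $1/\Delta(K)^{2}\le C_{3}^{2}$ I would combine the two estimates hidden in the definition of $C_{3}$. The first is immediate from the net property: by the separation condition~\eqref{separation_condition} every edge of $K$ --- in particular its longest edge --- has length at least $\eta$, so $1/\Delta(K)^{2}\le 1/\eta^{2}$. The second uses the sizing function. By Eq.~\eqref{size_constraint_two} there is a point $\bm{x}_{K}\in K$ with $\mathcal{L}_{[1/\mathcal{D}^{2}]}(\bm{x}_{K})=1/\Delta(K)^{2}+\zeta_{K}$, hence $1/\Delta(K)^{2}=\mathcal{L}_{[1/\mathcal{D}^{2}]}(\bm{x}_{K})-\zeta_{K}$. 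Restricted to $K$ the interpolant $\mathcal{L}_{[1/\mathcal{D}^{2}]}$ is the affine interpolant of $1/\mathcal{D}^{2}$ at the $d+1$ vertices of $K$, and since $1/\mathcal{D}^{2}\in C^{2}(\Omega)$ the sharp linear-interpolation error estimate in terms of the min-containment radius (Waldron's bound, \cite{waldron1998error}) gives $\bigl|\mathcal{L}_{[1/\mathcal{D}^{2}]}(\bm{x}_{K})-1/\mathcal{D}^{2}(\bm{x}_{K})\bigr|\le\frac{1}{2}R_{K,\mathrm{min}}^{2}\,\bigl\|1/\mathcal{D}^{2}\bigr\|_{2,L_{\infty}(\Omega)}$. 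Combining this with $1/\mathcal{D}^{2}(\bm{x}_{K})\le\bigl\|1/\mathcal{D}^{2}\bigr\|_{L_{\infty}(\Omega)}$ and $-\zeta_{K}\le\max_{K}|\zeta_{K}|$, and taking maxima over the elements, gives $1/\Delta(K)^{2}\le\frac{1}{2}\max_{K}R_{K,\mathrm{min}}^{2}\,\bigl\|1/\mathcal{D}^{2}\bigr\|_{2,L_{\infty}(\Omega)}+\bigl\|1/\mathcal{D}^{2}\bigr\|_{L_{\infty}(\Omega)}+\max_{K}|\zeta_{K}|$. Taking the smaller of the two bounds reproduces exactly $C_{3}^{2}$, which completes the argument.

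I expect the only non-routine ingredient to be the linear-interpolation error estimate with the min-containment radius: one must invoke (or re-derive) the fact that for a $C^{2}$ scalar function the affine interpolant on a simplex $K$ obeys a pointwise error bound of the form $\frac{1}{2}R_{K,\mathrm{min}}^{2}$ times the $L_{\infty}$ norm of the Hessian --- precisely Waldron's theorem quoted in the introduction --- which is what forces $R_{K,\mathrm{min}}$ (rather than, say, $\Delta(K)$) into the statement. The remaining steps are elementary; the only care needed is the bookkeeping of matching the Hessian seminorm in that estimate to the norm $\|\cdot\|_{2,L_{\infty}(\Omega)}$ appearing in $C_{3}$, and noting that $\zeta_{K}$ may be negative so that only $-\zeta_{K}\le|\zeta_{K}|$ is used.
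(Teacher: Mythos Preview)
Your proposal is correct and follows essentially the same route as the paper: apply Cauchy--Schwarz to bound the inner product by $|\nabla v|^{2}\sum_{i,j}\bm{p}_{K,ij}\cdot\bm{p}_{K,ij}$, pull out $\|\nabla v\|_{L_{\infty}(\Omega)}$, and then bound $1/\Delta(K)^{2}$ uniformly by combining Waldron's min-containment-radius estimate (applied to the linear interpolant of $1/\mathcal{D}^{2}$ via Eq.~\eqref{size_constraint_two}) with the separation bound $1/\Delta(K)^{2}\le 1/\eta^{2}$. The paper's proof is identical in structure and ingredients; your identification of Waldron's theorem as the only non-routine step matches exactly what the paper invokes.
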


\begin{proof}
    See Appendix~\ref{iso_upper_bound_lemma_proof}.
\end{proof}

Let us now interpret the new quantities $C_3$ and $\Theta$.

\begin{remark}[Min-Containment Radius Constant]
    The behavior of $C_3$ is directly influenced by the largest min-containment radius of an element in the mesh $\mathcal{T}$, the smoothness of the sizing function $1/\mathcal{D}^{2}(\bm{x})$, the difference between the local element size and the sizing function (given by $\zeta_{K}$), and the separation of points (given by $\eta)$. Suppose that we specify a particular function $1/\mathcal{D}^{2}(\bm{x})$ $\emph{a priori}$, and the discrepancy in the element size $\zeta_{K}$ is precisely controlled by the distribution of mesh nodes $\bm{p}_{l}$, $l = 1, \ldots, N_v$, which is also known \emph{a priori}. Under these circumstances, the mesh closely conforms to the sizing function, and the magnitude of the constant $C_3$ only depends on the min-containment radius. Furthermore, in this case $C_3$ is minimized on a standard Delaunay mesh in $\mathbb{R}^d$, as Rajan~\cite{rajan94optimality} proved that such a mesh minimizes the maximum min-containment radius, relative to any other mesh that uses the same set of points. 
    \label{min_containment_remark}
\end{remark}

\begin{remark}[Rajan's Functional]
    The quantity $\Theta$ is a functional of the mesh $\mathcal{T}$. It is the `weighted-sum of edge lengths squared', where the weighting quantity is the $d$-hypervolume of each simplex $K$. This functional was originally identified by Rajan~\cite{rajan94optimality}. His version of the functional included an extra factor of $1/(d+1)(d+2)$, such that
    \begin{align}
        \widehat{\Theta} = \frac{\Theta}{(d+1)(d+2)}  = \frac{1}{(d+1)(d+2)} \sum_{K \in \mathcal{T}} \left(\sum_{i=1}^{d+1} \sum_{j=1}^{i-1}\left(\bm{p}_{K,ij}\cdot \bm{p}_{K,ij} \right) |K| \right).
    \end{align}
    In~\cite{rajan94optimality}, Rajan proves that a standard Delaunay mesh in $\mathbb{R}^{d}$ is guaranteed to minimize $\widehat{\Theta}$ relative to any other mesh which uses the same set of points. Evidently, this Delaunay-minimization property immediately extends to $\Theta$. 
    \label{rajan_functional_remark}
\end{remark}

\begin{corollary}[Upper Bound for the Gradient Norm]
    The norm of the gradient in Definition~\ref{isotropic_gradient_norm} is bounded above by the infinity norm of the gradient as follows
    \begin{align}
         \left\| \nabla v \right\|_{L_{2}(\Omega)} \leq  \frac{C_3 \sqrt{\Theta}}{C_1} \left\| \nabla v \right\|_{L_{\infty}(\Omega)},
    \end{align}
    where $\Theta$ is a functional that depends on the mesh $\mathcal{T}$, and $C_1$ and $C_3$ are constants that depend on the mesh. In addition, $v$ resides in the intersection of $L_{\infty}$-vector fields and piecewise-$H^1$-vector fields over $\Omega$.
    \label{lemma_upper_bound_isotropic_gradient_norm}
\end{corollary}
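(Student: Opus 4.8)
\section*{Proof proposal for Corollary~\ref{lemma_upper_bound_isotropic_gradient_norm}}

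The plan is to chain the two preceding results directly, since the corollary is an immediate consequence of combining the lower bound in Theorem~\ref{functional_equivalence_lemma} with the upper bound in Lemma~\ref{iso_upper_bound_lemma}. First I would observe that the hypothesis space of the corollary---the intersection of $L_\infty$-vector fields and piecewise-$H^1$-vector fields over $\Omega$---is contained in the space of piecewise-$H^1$-vector fields, so Theorem~\ref{functional_equivalence_lemma} applies and yields the left-hand inequality
\begin{align*}
    C_1 \left\| \nabla v \right\|_{L_{2}(\Omega)} \leq \Psi_{\mathcal{T}}\left(\nabla v\right).
\end{align*}
It is exactly the space in which Lemma~\ref{iso_upper_bound_lemma} is stated, so that lemma gives
\begin{align*}
    \Psi_{\mathcal{T}}\left(\nabla v\right) \leq C_{3} \sqrt{\Theta} \left\| \nabla v \right\|_{L_{\infty}(\Omega)}.
\end{align*}

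Next I would concatenate these two chains of inequalities to obtain $C_1 \left\| \nabla v \right\|_{L_{2}(\Omega)} \leq C_{3} \sqrt{\Theta} \left\| \nabla v \right\|_{L_{\infty}(\Omega)}$, and then divide through by $C_1$. The only point requiring a word of care is that one must know $C_1 > 0$ in order to divide; this holds because the mesh $\mathcal{T}$ is a (nondegenerate) triangulation, so every simplex $K$ has strictly positive thickness $\Xi(K) > 0$, whence by Remark~\ref{sliver_constant_remark} $C_1 = \sqrt{d(d+1)/2}\,\min_K \Xi(K) > 0$. (On a protected Delaunay mesh one even has the quantitative lower bound from Remark~\ref{protected_delaunay_remark}.) Dividing then produces the claimed estimate
\begin{align*}
    \left\| \nabla v \right\|_{L_{2}(\Omega)} \leq \frac{C_3 \sqrt{\Theta}}{C_1} \left\| \nabla v \right\|_{L_{\infty}(\Omega)}.
\end{align*}

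There is essentially no obstacle here: the result is a one-line corollary whose entire content has already been established in Theorem~\ref{functional_equivalence_lemma} and Lemma~\ref{iso_upper_bound_lemma}. If anything, the ``hard part'' is purely bookkeeping---verifying that the function-space hypotheses line up across the two cited results and noting the positivity of $C_1$---rather than any genuine analysis. For this reason I would keep the proof to a few lines and simply reference the two prior statements.
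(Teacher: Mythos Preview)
Your proposal is correct and follows exactly the same approach as the paper: combine the lower bound $C_1\|\nabla v\|_{L_2(\Omega)}\le \Psi_{\mathcal T}(\nabla v)$ from Theorem~\ref{functional_equivalence_lemma} with the upper bound $\Psi_{\mathcal T}(\nabla v)\le C_3\sqrt{\Theta}\,\|\nabla v\|_{L_\infty(\Omega)}$ from Lemma~\ref{iso_upper_bound_lemma} and divide by $C_1$. Your additional remarks on the function-space hypotheses and the positivity of $C_1$ are fine but go slightly beyond what the paper records; the paper's proof is the one-liner you anticipated.
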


\begin{proof}
    The result follows immediately from combining Eq.~\eqref{functional_equivalence} from Theorem~\ref{functional_equivalence_lemma} with Eq.~\eqref{iso_upper_bound} from Lemma~\ref{iso_upper_bound_lemma}.
\end{proof}

Next, we introduce some results which govern the interpolation of the gradient on our canonical mesh $\mathcal{T}$. These results leverage classical interpolation theory, and hold true for polynomials of any degree.

\begin{definition}[Gradient Interpolation]
    Consider the following interpolation of the gradient on element $K \in \mathcal{T}$
    \begin{align}
       (\nabla v)_{h,i} &= \left(\frac{\partial v}{\partial x_i} \right)_h, \qquad \text{where} \qquad \left(\frac{\partial v}{\partial x_i} \right)_h = \sum_{j=1}^{N_p} \frac{\partial v}{\partial x_i}(\bm{x}_j)L_j(\bm{\xi}(\bm{x})),
    \end{align}
    and where $i = 1, \ldots, d$ are indexes, $\bm{x}$ is a vector of coordinates in physical space $(x_1, x_2, \ldots, x_d)$, $\bm{\xi}$ is a vector of coordinates in reference space $(\xi_1, \xi_2, \ldots, \xi_{d})$, $\bm{\xi}(\bm{x})$ is the map from physical space to reference space, $\bm{x}(\bm{\xi})$ is the inverse map from reference space to physical space, $\bm{x}_j = \bm{x}(\bm{\xi}_j)$ is an interpolation point on the element $K$, $N_p$ is the total number of interpolation points, and $L_j(\bm{\xi})$ is a multi-dimensional Lagrange polynomial which assumes the value of one at $\bm{\xi}_j$ and zero at all other interpolation points. One may consult Figure~\ref{element_map_fig} for an illustration of the relationship between the physical and reference space elements in three dimensions.

    Finally, an explicit formula for $N_p$ is
    \begin{align}
        N_p = \frac{(k+d)!}{k!d!},
    \end{align}
    for polynomials of degree $\leq k$.
\end{definition}

\begin{figure}[h!]
    \centering
    \includegraphics[width = 1.0\textwidth]{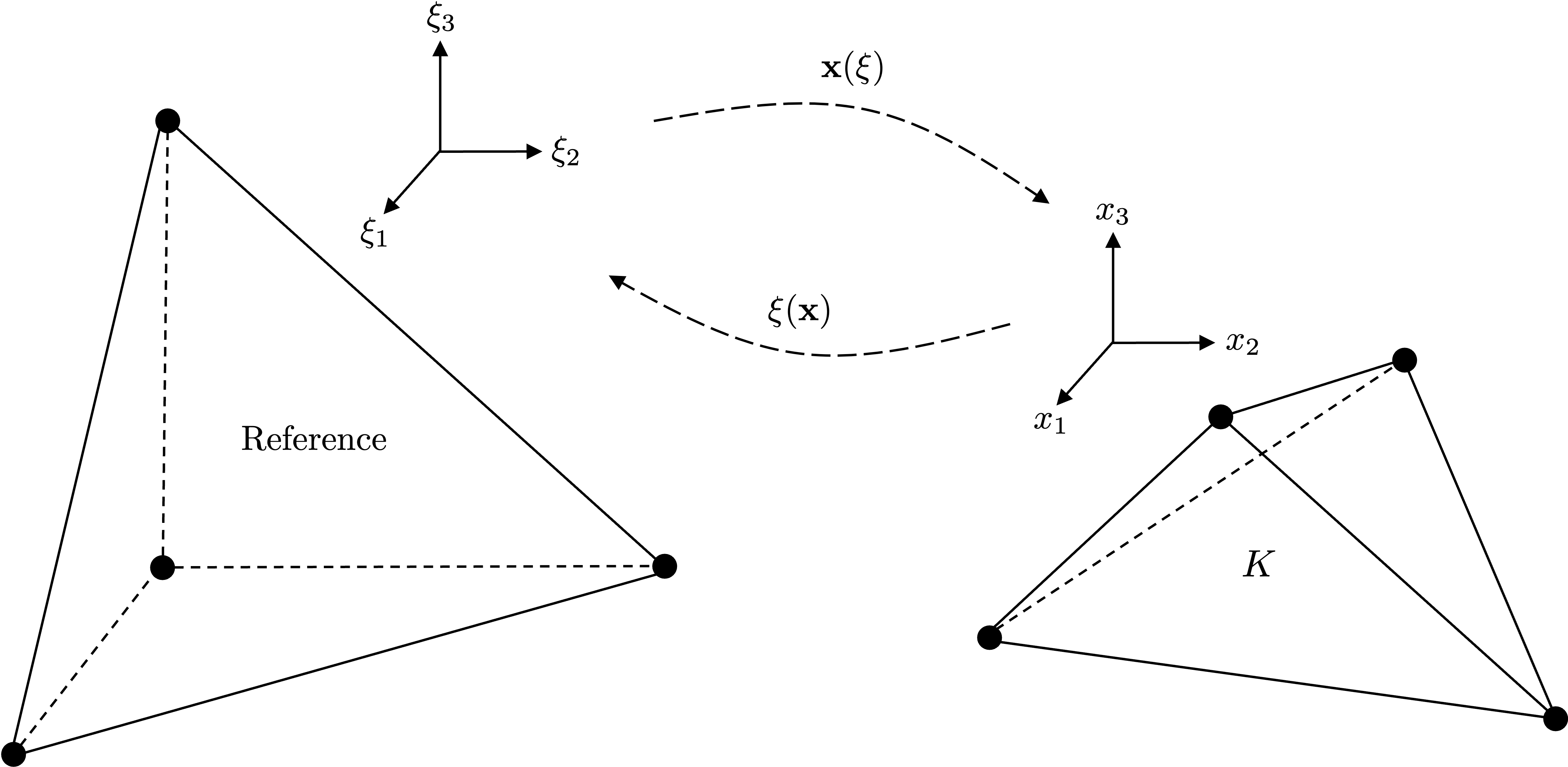}
    \caption{Mapping between the reference element and the physical element $K$ in three dimensions.}
    \label{element_map_fig}
\end{figure}

We are now ready to introduce a theorem which governs the error of gradient interpolation.

\begin{theorem}[Error Estimate for Gradient Interpolation]
    A measure of the error between the gradient $\nabla v$ and a piecewise polynomial interpolation of the gradient $(\nabla v)_h$ on the mesh $\mathcal{T}$ is given by
    \begin{align}
         \Psi_{\mathcal{T}}\left(\nabla v - (\nabla v)_h\right) \leq (1 + \Lambda) C_{3} \sqrt{\Theta} \left\| \nabla v - (\nabla v)^{\ast} \right\|_{L_{\infty}(\Omega)},
         \label{functional_error_estimate}
    \end{align}
    where $(\nabla v)^{\ast}$ is the piecewise polynomial `best approximation' of the gradient over $\mathcal{T}$, $\Lambda$ is the Lebesgue constant, $\Theta$ is a functional of the mesh, and $C_3$ is a constant that also depends on the mesh.
    \label{functional_error_estimate_lemma}
\end{theorem}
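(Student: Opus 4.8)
The plan is to reduce the claim to the upper bound already established in Lemma~\ref{iso_upper_bound_lemma}, by inserting the best approximation into the error and then invoking the classical Lebesgue lemma element by element. The only structural property of $\Psi_{\mathcal{T}}$ needed beyond Lemma~\ref{iso_upper_bound_lemma} is subadditivity, $\Psi_{\mathcal{T}}(\bm{g}_1 + \bm{g}_2) \leq \Psi_{\mathcal{T}}(\bm{g}_1) + \Psi_{\mathcal{T}}(\bm{g}_2)$, for admissible vector fields $\bm{g}_1,\bm{g}_2$.

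I would first establish this subadditivity. Pointwise on each $K$, the componentwise triangle inequality $\mathrm{abs}(\bm{g}_1 + \bm{g}_2) \leq \mathrm{abs}(\bm{g}_1) + \mathrm{abs}(\bm{g}_2)$ together with the nonnegativity of the entries of $\mathrm{abs}(\bm{p}_{K,ij})$ yields $0 \leq \mathrm{abs}(\bm{g}_1 + \bm{g}_2)\cdot\mathrm{abs}(\bm{p}_{K,ij}) \leq \mathrm{abs}(\bm{g}_1)\cdot\mathrm{abs}(\bm{p}_{K,ij}) + \mathrm{abs}(\bm{g}_2)\cdot\mathrm{abs}(\bm{p}_{K,ij})$. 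Viewing the family $\{\mathrm{abs}(\bm{g})\cdot\mathrm{abs}(\bm{p}_{K,ij})\}$, indexed by $K$ and by the pairs $(i,j)$, as an element of the $L_2$ space formed from Lebesgue measure on each $K$, weighted by $h_K^{-2}$ and summed over $K$ and over $(i,j)$, the quantity $\Psi_{\mathcal{T}}(\bm{g})$ is precisely the $L_2$ norm of that element. Subadditivity then follows from the pointwise bound above combined with Minkowski's inequality in this $L_2$ space. This is the only place where the $\mathrm{abs}$-structure of Definition~\ref{isotropic_functional} enters, and it causes no difficulty since $\mathrm{abs}$ is itself subadditive.

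With subadditivity available, I would split $\nabla v - (\nabla v)_h = \left(\nabla v - (\nabla v)^{\ast}\right) + \left((\nabla v)^{\ast} - (\nabla v)_h\right)$, so that $\Psi_{\mathcal{T}}(\nabla v - (\nabla v)_h) \leq \Psi_{\mathcal{T}}\left(\nabla v - (\nabla v)^{\ast}\right) + \Psi_{\mathcal{T}}\left((\nabla v)^{\ast} - (\nabla v)_h\right)$. The first term is controlled by Lemma~\ref{iso_upper_bound_lemma} applied to the admissible field $\nabla v - (\nabla v)^{\ast}$, giving $\Psi_{\mathcal{T}}(\nabla v - (\nabla v)^{\ast}) \leq C_3\sqrt{\Theta}\,\|\nabla v - (\nabla v)^{\ast}\|_{L_{\infty}(\Omega)}$. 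For the second term, $(\nabla v)^{\ast} - (\nabla v)_h$ is piecewise polynomial, hence admissible, so Lemma~\ref{iso_upper_bound_lemma} again gives $\Psi_{\mathcal{T}}((\nabla v)^{\ast} - (\nabla v)_h) \leq C_3\sqrt{\Theta}\,\|(\nabla v)^{\ast} - (\nabla v)_h\|_{L_{\infty}(\Omega)}$. On each $K$ the Lagrange interpolation operator $I_K$ acts componentwise, is linear, and reproduces polynomials of degree $\leq k$; since $(\nabla v)^{\ast}$ restricted to $K$ lies in that space while $(\nabla v)_h|_K = I_K(\nabla v|_K)$, we obtain $(\nabla v)^{\ast}|_K - (\nabla v)_h|_K = I_K\left((\nabla v)^{\ast} - \nabla v\right)$, and hence $\|(\nabla v)^{\ast} - (\nabla v)_h\|_{L_{\infty}(K)} \leq \Lambda\,\|\nabla v - (\nabla v)^{\ast}\|_{L_{\infty}(K)} \leq \Lambda\,\|\nabla v - (\nabla v)^{\ast}\|_{L_{\infty}(\Omega)}$, where $\Lambda$ is the Lebesgue constant of the reference interpolation nodes (identical on every $K$ by affine equivalence). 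Taking the maximum over $K$ and adding the two contributions produces $(1 + \Lambda)C_3\sqrt{\Theta}\,\|\nabla v - (\nabla v)^{\ast}\|_{L_{\infty}(\Omega)}$, which is the claim.

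The proof is short, and the genuine obstacle is essentially bookkeeping: one must ensure that the best approximation $(\nabla v)^{\ast}$ is taken in the same degree-$\leq k$ piecewise polynomial space as the interpolant, since otherwise $I_K$ fails to reproduce it and the Lebesgue-lemma estimate breaks, and one must treat the gradient interpolation componentwise so that the scalar reference-element Lebesgue constant carries over unchanged to the vector-valued setting. Both points are already implicit in the Gradient Interpolation definition preceding the theorem. The slightly non-standard element is verifying subadditivity of $\Psi_{\mathcal{T}}$, but as noted this reduces to the triangle inequality for $\mathrm{abs}$ followed by Minkowski.
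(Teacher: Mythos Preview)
Your proof is correct and uses the same two ingredients as the paper --- Lemma~\ref{iso_upper_bound_lemma} and the classical Lebesgue-constant estimate --- but the order of operations differs in a way worth noting. You first establish subadditivity of $\Psi_{\mathcal{T}}$ (via the componentwise triangle inequality for $\mathrm{abs}$ followed by Minkowski), split at the level of the functional, and then invoke Lemma~\ref{iso_upper_bound_lemma} twice. The paper instead performs the splitting entirely in $L_\infty$: it inserts $(\nabla v)^{\ast}$ into $\|\nabla v - (\nabla v)_h\|_{L_\infty(K)}$, applies the Lebesgue bound there to obtain $\|\nabla v - (\nabla v)_h\|_{L_\infty(\Omega)} \leq (1+\Lambda)\|\nabla v - (\nabla v)^{\ast}\|_{L_\infty(\Omega)}$, and only then invokes Lemma~\ref{iso_upper_bound_lemma} a single time with $\nabla v$ replaced by $\nabla v - (\nabla v)_h$. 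The paper's route is marginally shorter since it sidesteps the need to verify subadditivity of $\Psi_{\mathcal{T}}$ altogether; your route has the virtue of making that structural property of the roughness functional explicit, which could be reused elsewhere. Either way the constants and the final bound are identical.
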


\begin{proof}
    See Appendix~\ref{functional_error_estimate_lemma_proof}.
\end{proof}

\begin{corollary}[Simplified Error Estimate for Gradient Interpolation]
     A measure of the error between the gradient $\nabla v$ and a piecewise polynomial interpolation of the gradient $(\nabla v)_h$ on the mesh $\mathcal{T}$ is given by
    \begin{align}
    \left\| \nabla v - (\nabla v)_h \right\|_{L_{2}(\Omega)} \leq \frac{(1 + \Lambda) C_{3} \sqrt{\Theta}}{C_1} \left\| \nabla v - (\nabla v)^{\ast} \right\|_{L_{\infty}(\Omega)},
         \label{norm_error_estimate}
    \end{align}
    where $(\nabla v)^{\ast}$ is the piecewise polynomial `best approximation' of the gradient over $\mathcal{T}$, $\Lambda$ is the Lebesgue constant, $\Theta$ is a functional that depends on the mesh, and $C_1$ and $C_3$ are constants that depend on the mesh.
    \label{modified_corollary}
\end{corollary}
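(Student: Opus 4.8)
The plan is to obtain \eqref{norm_error_estimate} directly by chaining the two principal results already in hand, in exactly the same manner that Corollary~\ref{lemma_upper_bound_isotropic_gradient_norm} was deduced from Theorem~\ref{functional_equivalence_lemma} and Lemma~\ref{iso_upper_bound_lemma}. Concretely, I would set $\bm{e} \equiv \nabla v - (\nabla v)_h$ and first note that $\bm{e}$ belongs to the function class for which the equivalence of Theorem~\ref{functional_equivalence_lemma} is valid, namely the piecewise-$H^1$-vector fields on $\Omega$: under the smoothness hypothesis on $v$ the exact gradient $\nabla v$ is (globally) $H^1$, while the interpolant $(\nabla v)_h$ is piecewise polynomial and hence piecewise-$H^1$, so their difference is piecewise-$H^1$ as well. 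This is precisely the class on which $\Psi_{\mathcal{T}}$ is evaluated in Theorem~\ref{functional_error_estimate_lemma}, so the two estimates compose without friction.

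Next I would apply the left-hand inequality of \eqref{functional_equivalence} to the vector field $\bm{e}$, which gives
\begin{align*}
    C_1 \left\| \nabla v - (\nabla v)_h \right\|_{L_{2}(\Omega)} \;\leq\; \Psi_{\mathcal{T}}\!\left(\nabla v - (\nabla v)_h\right),
\end{align*}
and then invoke Theorem~\ref{functional_error_estimate_lemma}, Eq.~\eqref{functional_error_estimate}, to bound the right-hand side above by $(1+\Lambda)\,C_3\sqrt{\Theta}\,\left\| \nabla v - (\nabla v)^{\ast} \right\|_{L_{\infty}(\Omega)}$. Dividing through by $C_1$, which is strictly positive because the mesh $\mathcal{T}$ has positive minimum thickness (guaranteed on a protected Delaunay mesh, cf.\ Remarks~\ref{sliver_constant_remark} and~\ref{protected_delaunay_remark}, so that the expression \eqref{sliver_constant} does not vanish), yields \eqref{norm_error_estimate}.

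The only place requiring care — rather than a genuine obstacle — is the bookkeeping on function spaces and on the nonvanishing of $C_1$: one must check that $v$ is regular enough that $\nabla v$, its piecewise polynomial best approximation $(\nabla v)^{\ast}$, and its interpolant $(\nabla v)_h$ are all well defined and that the $L_{\infty}$-norm on the right-hand side is finite, and that no simplex of $\mathcal{T}$ is degenerate. Both conditions hold automatically under the running hypotheses on $S$ and $\mathcal{T}$, so there is no substantive new estimate to carry out; the corollary is immediate from Theorems~\ref{functional_equivalence_lemma} and~\ref{functional_error_estimate_lemma}.
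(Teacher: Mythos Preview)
Your proposal is correct and matches the paper's own proof essentially line for line: the paper replaces $\nabla v$ by $\nabla v - (\nabla v)_h$ in Eq.~\eqref{functional_equivalence} of Theorem~\ref{functional_equivalence_lemma} and then combines the resulting lower bound with Eq.~\eqref{functional_error_estimate} of Theorem~\ref{functional_error_estimate_lemma}, exactly as you describe. Your additional remarks on the function-space bookkeeping and the strict positivity of $C_1$ are sound and do not deviate from the paper's argument.
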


\begin{proof}
    We begin by replacing $\nabla v$ with $(\nabla v - (\nabla v)_h)$ in Eq.~\eqref{functional_equivalence} from Theorem~\ref{functional_equivalence_lemma}, such that
    \begin{align*}
        C_1 \left\| \nabla v - (\nabla v)_h \right\|_{L_{2}(\Omega)} \leq \Psi_{\mathcal{T}}\left(\nabla v - (\nabla v)_h\right) \leq  C_2 \left\| \nabla v - (\nabla v)_h \right\|_{L_{2}(\Omega)}.
    \end{align*}
    Thereafter, combining the expression above with Eq.~\eqref{functional_error_estimate} from Theorem~\ref{functional_error_estimate_lemma} yields the desired result.
\end{proof}

\begin{remark}[Error Estimate Interpretation] \label{optimal_delaunay_remark}
    Corollary~\ref{modified_corollary} is important, because it characterizes the dominant sources of error in high-order, piecewise polynomial gradient interpolation. Broadly speaking, the interpolation error is \emph{amplified} by four factors:
    \begin{itemize}
        \item Small values of the constant $C_1$. This constant becomes small when the mesh contains sliver elements, (see Remark~\ref{sliver_constant_remark}). Recall that the element thickness can become arbitrarily small on a standard Delaunay mesh for $d>2$. However, on a protected Delaunay mesh, we are guaranteed a lower bound for the minimum thickness of our elements. This fact was established in Remark~\ref{protected_delaunay_remark}.
        
        \item Large values of the constant $C_3$. This constant becomes large when the maximum min-containment radius becomes large. The maximum min-containment radius is minimized on a standard Delaunay mesh. This fact was established by Rajan in~\cite{rajan94optimality}, and discussed in Remark~\ref{min_containment_remark}.
        
        \item Large values of the mesh functional $\Theta$. This functional can be minimized by using a standard Delaunay mesh. This fact was established by Rajan in~\cite{rajan94optimality}, and discussed in Remark~\ref{rajan_functional_remark}.
        
        \item Large values of the Lebesgue constant $\Lambda$. This constant can be minimized by carefully choosing the locations of the interpolation points within each element. The process of finding Lebesgue-optimized interpolation points is a thriving industry, and we refer the interested reader to~\cite{warburton2006explicit,isaac2020recursive,gobel2024explicit} for detailed discussions of this topic. 
    \end{itemize}
    Based on the discussion above, one may seek to minimize gradient interpolation error by using Lebesgue-optimized interpolation points on a protected Delaunay mesh. 
\end{remark}

\section{Theoretical Results: Vector-Field Interpolation} \label{vector_section}

Let us briefly shift our attention to the problem of interpolating a vector-valued function $\bm{f} = \bm{f}(\bm{x}) = \bm{f}(x_1, x_2, \ldots, x_d)$ defined on the domain $\Omega$, where $\bm{f} \in \left[L_{2}(\Omega)\right]^{d}$. In this case, we can define the following function:

\begin{definition}[Edge Functional]
Consider the following, non-negative functional of $\bm{f}$ over the mesh~$\mathcal{T}$
\begin{align}
    \Psi_{\mathcal{T}}\left(\bm{f} \right) \equiv  \left[\sum_{K \in \mathcal{T}} \frac{1}{h_{K}^{2}} \int_{K} \sum_{i=1}^{d+1} \sum_{j=1}^{i-1}\left(\mathrm{abs}(\bm{f})\cdot \mathrm{abs}(\bm{p}_{K,ij}) \right)^{2} dV \right]^{1/2},
\end{align}
where we assume that $\bm{f}$ is an $L_2$-vector field on $\Omega$, and $h_K$ is a characteristic length scale associated with each $K$. 
\label{edge_functional}
\end{definition}


With this definition in mind, we note that Theorem~\ref{functional_equivalence_lemma}, Lemma~\ref{iso_upper_bound_lemma}, Corollary~\ref{lemma_upper_bound_isotropic_gradient_norm}, and Theorem~\ref{functional_error_estimate_lemma} hold with $\bm{f}$ in place of $\nabla v$. In addition, the following corollary holds.
\begin{corollary}[Simplified Error Estimate for Vector Interpolation]
     A measure of the error between the vector, $\bm{f}$, and a piecewise polynomial interpolation of the vector, $\bm{f}_h$, on the mesh $\mathcal{T}$ is given by
    \begin{align}
    \left\| \bm{f} - \bm{f}_h \right\|_{L_{2}(\Omega)} \leq \frac{(1 + \Lambda) C_{3} \sqrt{\Theta}}{C_1} \left\| \bm{f} - \bm{f}^{\ast} \right\|_{L_{\infty}(\Omega)},
         \label{vector_norm_error_estimate}
    \end{align}
    where $\bm{f}^{\ast}$ is the piecewise polynomial `best approximation' of the vector $\bm{f}$ over $\mathcal{T}$, $\Lambda$ is the Lebesgue constant, $\Theta$ is a functional that depends on the mesh, and $C_1$ and $C_3$ are constants that depend on the mesh.
    \label{vector_modified_corollary}
\end{corollary}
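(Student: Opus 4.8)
The plan is to follow exactly the chaining argument used to establish Corollary~\ref{modified_corollary}, with the scalar gradient $\nabla v$ replaced throughout by the vector field $\bm{f}$. First I would note that the only structural property of $\nabla v$ actually invoked in the proofs of Theorem~\ref{functional_equivalence_lemma}, Lemma~\ref{iso_upper_bound_lemma}, and Theorem~\ref{functional_error_estimate_lemma} is that it is a piecewise-$H^1$-vector field on $\Omega$ (with $L_\infty$-regularity where needed); none of those arguments uses that the vector field is the gradient of a scalar potential. Since $\bm{f} \in [L_{2}(\Omega)]^{d}$ with piecewise-$H^1$ components is precisely such a vector field — and its componentwise Lagrange interpolant $\bm{f}_h$ plays exactly the role of $(\nabla v)_h$ — the $\bm{f}$-versions of those three results hold verbatim, as already asserted in the text preceding Definition~\ref{edge_functional}.

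With that in hand, the derivation is a two-line substitution. I would first apply the $\bm{f}$-analog of Theorem~\ref{functional_equivalence_lemma} to the error field $\bm{f} - \bm{f}_h$, which (using that $\bm{f} - \bm{f}_h$ is itself a legitimate piecewise-$H^1$-vector field) gives
\begin{align*}
C_1 \left\| \bm{f} - \bm{f}_h \right\|_{L_{2}(\Omega)} \leq \Psi_{\mathcal{T}}\left(\bm{f} - \bm{f}_h\right) \leq C_2 \left\| \bm{f} - \bm{f}_h \right\|_{L_{2}(\Omega)}.
\end{align*}
I would then invoke the $\bm{f}$-analog of Theorem~\ref{functional_error_estimate_lemma}, namely
\begin{align*}
\Psi_{\mathcal{T}}\left(\bm{f} - \bm{f}_h\right) \leq (1 + \Lambda) C_{3} \sqrt{\Theta} \left\| \bm{f} - \bm{f}^{\ast} \right\|_{L_{\infty}(\Omega)},
\end{align*}
where $\bm{f}^{\ast}$ is the piecewise polynomial best approximation of $\bm{f}$ over $\mathcal{T}$. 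Combining the lower bound in the first display with the upper bound in the second and dividing through by $C_1$ yields Eq.~\eqref{vector_norm_error_estimate}.

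There is essentially no hard step here; the content of the corollary is inherited wholesale from the gradient case, and the proof can be written in a few lines mirroring the proof of Corollary~\ref{modified_corollary}. The only point requiring a moment's care — and the one I would make explicit — is the justification that the earlier theorems transfer to a generic vector field: one must confirm that $\bm{f}$, and more importantly the interpolation error $\bm{f} - \bm{f}_h$, lies in the function class (the intersection of $L_{\infty}$-vector fields and piecewise-$H^1$-vector fields over $\Omega$) required by Lemma~\ref{iso_upper_bound_lemma} and Theorem~\ref{functional_error_estimate_lemma}. This holds provided $\bm{f}$ is taken sufficiently smooth (e.g.\ with bounded, piecewise-continuous first derivatives), which is also the standing hypothesis under which the componentwise interpolation operator and the best-approximation operator $\bm{f} \mapsto \bm{f}^{\ast}$ are well defined; the componentwise Lagrange interpolant $\bm{f}_h$ is then piecewise polynomial, hence trivially in the required class, and so is the difference $\bm{f} - \bm{f}_h$. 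Once this membership is granted, no further obstacle remains.
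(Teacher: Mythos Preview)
Your proposal is correct and matches the paper's approach exactly: the paper's proof simply states that it follows from the proof of Corollary~\ref{modified_corollary} with $\bm{f}$ in place of $\nabla v$, which is precisely the chaining of the equivalence theorem (applied to $\bm{f}-\bm{f}_h$) with the $\bm{f}$-version of Theorem~\ref{functional_error_estimate_lemma} that you spell out. Your added remark about verifying membership in the required function class is a reasonable point of care but not something the paper addresses explicitly.
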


\begin{proof}
    The proof follows immediately from the proof of Corollary~\ref{modified_corollary}, with $\bm{f}$ in place of $\nabla v$.
\end{proof}

\begin{remark}
    Recall that Remark~\ref{optimal_delaunay_remark} describes the conditions under which we can obtain quasi-optimal interpolation of gradients $\nabla v$. These conditions are also required in order to obtain quasi-optimal interpolation of $L_2$-vector fields $\bm{f}$.
\end{remark}

\section{Conclusion} \label{conclusion_section}

In this work, we extend the concept of roughness, originally introduced by Rippa~\cite{rippa1990minimal}, into dimensions greater than two. We construct a roughness functional, which is inspired by the variational formulations of elliptic problems in $\mathbb{R}^d$. We prove that this roughness functional is equivalent to the $L_2$-norm of the gradient. In addition, we leverage this equivalence in order to construct an upper bound for the $L_2$-norm of the gradient interpolation error. This upper bound is written in terms of the best-approximation error in the infinity norm, Rajan's functional, the Lebesgue constant, and two new constants: a \emph{sliver-detecting} constant, and a \emph{min-containment radius} constant. Based on these results, we conclude that quasi-optimal, high-order, piecewise polynomial gradient interpolation can be achieved on a protected Delaunay mesh, equipped with Lebesgue-optimized interpolation points. Finally, we show that our analysis of quasi-optimal gradient interpolation extends to quasi-optimal vector-field interpolation. 

To our knowledge, we have constructed the first mathematical results which govern the optimality of high-order, piecewise polynomial interpolation on protected Delaunay meshes. In addition, we have identified one of the few practical applications of Rajan's optimality results~\cite{rajan94optimality}. Based on this work, we believe there are now stronger incentives to develop additional  optimality results which govern geometric functionals.



\bibliography{references}

\appendix

\section{Proofs} \label{first_appendix}

\subsection{Proof of Theorem~\ref{functional_equivalence_lemma}}
\label{functional_equivalence_lemma_proof}

\begin{proof}
    We start by establishing the lower bound and identifying $C_1$. From the definition of $\Psi_{\mathcal{T}}\left(\nabla v\right)$, we have
\begin{align}
    \label{iso_lower_two}
    \left(\Psi_{\mathcal{T}}\left(\nabla v\right)\right)^{2} &= \sum_{K \in \mathcal{T}} \frac{1}{h_{K}^{2}} \int_{K} \sum_{i=1}^{d+1} \sum_{j=1}^{i-1}\left(\mathrm{abs}(\nabla v)\cdot \mathrm{abs}(\bm{p}_{K,ij}) \right)^{2} dV
    \\[1.0ex]
    \nonumber &\geq \sum_{K \in \mathcal{T}} \frac{1}{h_{K}^{2}} \int_{K} \min_{m} \left(\sum_{i=1}^{d+1} \sum_{j=1}^{i-1} \left( p_{K,ij}^{m} \right)^2 \right) \left( \nabla v\cdot \nabla v \right) dV,
\end{align}
where we have used the following sequence of inequalities
\begin{align}
    \label{expanded_product}
   \sum_{i=1}^{d+1} \sum_{j=1}^{i-1} \left(\mathrm{abs}(\nabla v)\cdot \mathrm{abs}(\bm{p}_{K,ij}) \right)^{2} &= \sum_{i=1}^{d+1} \sum_{j=1}^{i-1} \left( \sum_{m=1}^{d} |(\nabla v)^{m}| |p_{K,ij}^{m}| \right)^{2} \\[1.0ex]
   \nonumber &\geq \sum_{i=1}^{d+1} \sum_{j=1}^{i-1} \left[ \sum_{m=1}^{d} \left( |(\nabla v)^{m}| |p_{K,ij}^{m}| \right)^{2} \right] \\[1.0ex]
   \nonumber &= \sum_{i=1}^{d+1} \sum_{j=1}^{i-1} \left[ \sum_{m=1}^{d} \left( (\nabla v)^{m} \right)^{2} \left(p_{K,ij}^{m} \right)^{2} \right] \\[1.0ex]
    \nonumber &=  \sum_{m=1}^{d} \left[ \left( (\nabla v)^{m} \right)^{2} \sum_{i=1}^{d+1} \sum_{j=1}^{i-1} \left(p_{K,ij}^{m} \right)^{2}\right] \\[1.0ex]
   \nonumber &\geq \min_{m} \left(  \sum_{i=1}^{d+1} \sum_{j=1}^{i-1} \left(p_{K,ij}^{m} \right)^{2} \right) \sum_{m=1}^{d} \left( (\nabla v)^{m} \right)^{2}.
\end{align}
The second line of Eq.~\eqref{expanded_product} follows from the observation that the sum of the squares is less than the square of the sum in cases where all the  terms are non-negative. 

Now, it remains for us to bound the first term in the integrand of Eq.~\eqref{iso_lower_two}---i.e.~the argument of the minimum function
\begin{align}
    \sum_{i=1}^{d+1} \sum_{j=1}^{i-1} \left( p_{K,ij}^{m} \right)^2.
    \label{term_to_be_bounded}
\end{align}
Bounding this term requires a fairly lengthy process, which leverages various geometric properties of the simplex in higher dimensions. With this in mind, we recall that the Levi-Civita symbol $\epsilon_{a_1,a_2,\dots, a_d}$ is completely antisymmetric, that is
\begin{align}
\epsilon_{a_1, a_2, \dots a_d} 
=\begin{cases}
    +1 & \text{if $(a_1, a_2, \dots, a_d)$ is an even permutation of $(1, 2, \dots, d)$}, \\
    -1 & \text{if $(a_1, a_2, \dots, a_d)$ is an odd permutation of $(1, 2, \dots, d)$}, \\
    0 & \text{otherwise}.
\end{cases}
\end{align}
By the definition of the determinant, we have that
\begin{align}
\label{cross_product_determinant}
\det( [\bm{q}_1, \bm{q}_2, \dots, \bm{q}_d]) &= \det \left( \begin{bmatrix}
    q_{1}^{1} & q_{2}^{1} & \cdots & q_{d}^{1} \\[1.0ex]
    q_{1}^{2} & q_{2}^{2} & \cdots & q_{d}^{2} \\[1.0ex]
    & & \vdots & \\
    q_{1}^{m} & q_{2}^{m} & \cdots & q_{d}^{m} \\[1.0ex]
    & & \vdots & \\
    q_{1}^{d} & q_{2}^{d} & \cdots & q_{d}^{d}
\end{bmatrix} \right)
\\
\nonumber
&= (-1)^{m+1} \det \left( \begin{bmatrix}
    q_{1}^{m} & q_{2}^{m} & \cdots & q_{d}^{m} \\[1.0ex]
    q_{1}^{1} & q_{2}^{1} & \cdots & q_{d}^{1} \\[1.0ex]
    & & \vdots & \\
q_{1}^{m-1} & q_{2}^{m-1} & \cdots & q_{d}^{m-1} \\[1.0ex]
    q_{1}^{m+1} & q_{2}^{m+1} & \cdots & q_{d}^{m+1} \\[1.0ex]
    & & \vdots & \\
    q_{1}^{d} & q_{2}^{d} & \cdots & q_{d}^{d}
\end{bmatrix} \right) \\[1.0ex]
\nonumber
&= \epsilon_{a_1, a_2, \dots, a_d}  q_{1}^{a_1} q_{2}^{a_2} \dots q_{d}^{a_d},
\end{align}
where $[\bm{q}_1, \bm{q}_2, \dots, \bm{q}_d]$ denotes the matrix whose columns are $\bm{q}_1, \bm{q}_2, \dots, \bm{q}_d$, and each $q_i^{a_i}$ denotes the $a_i$-th entry of $\bm{q}_i \in \mathbb{R}^d$. Here, we use the Einstein summation convention, that is we sum over repeated indices. Thanks to the standard interpretation of the determinant as an (orientated) hypervolume of a $d$-parallelepiped spanned by the vectors $\bm{q}_1, \bm{q}_{2}, \dots, \bm{q}_d$, we have that
\begin{align}
d! |K| &= |\det( [\bm{q}_1, \bm{q}_2, \dots, \bm{q}_d]) |, \label{volume_identity}
\end{align}
where $|K|$ denotes the hypervolume of the simplex whose edges emanating from $\bm{0}$ are $\bm{q}_1, \bm{q}_2, \dots, \bm{q}_d$. The factor of $d!$ in Eq.~\eqref{volume_identity} originates from the observation that a $d$-cube can be subdivided into $d!$ simplices that all have the same hypervolume. 

Next, we can introduce a normal vector
\begin{align*}
    \bm{n}(r) &= (-1)^{r} \left( \bm{e}_{1} \det \left( \begin{bmatrix} q_{1}^{2} & q_{2}^{2} & \cdots & \widehat{q_{r}^{2}} & \cdots & q_{d}^{2} \\[1.0ex]
    q_{1}^{3} & q_{2}^{3} & \cdots & \widehat{q_{r}^{3}} & \cdots & q_{d}^{3} \\[1.0ex]
    & & \vdots & & \\[1.0ex]
    q_{1}^{m} & q_{2}^{m} & \cdots & \widehat{q_{r}^{m}} & \cdots & q_{d}^{m} \\[1.0ex]
    & & \vdots & & \\[1.0ex]
    q_{1}^{d} & q_{2}^{d} & \cdots & \widehat{q_{r}^{d}} & \cdots & q_{d}^{d}
    \end{bmatrix}  \right) \right. \\[1.0ex] &- \bm{e}_{2} \det \left( \begin{bmatrix} q_{1}^{1} & q_{2}^{1} & \cdots & \widehat{q_{r}^{1}} & \cdots & q_{d}^{1} \\[1.0ex]
    q_{1}^{3} & q_{2}^{3} & \cdots & \widehat{q_{r}^{3}} & \cdots & q_{d}^{3} \\[1.0ex]
    & & \vdots & & \\[1.0ex]
    q_{1}^{m} & q_{2}^{m} & \cdots & \widehat{q_{r}^{m}} & \cdots & q_{d}^{m} \\[1.0ex]
    & & \vdots & & \\[1.0ex]
    q_{1}^{d} & q_{2}^{d} & \cdots & \widehat{q_{r}^{d}} & \cdots & q_{d}^{d}
    \end{bmatrix}  \right) + \cdots \\[1.0ex]
    &+ (-1)^{m+1} \bm{e}_{m} \det \left( \begin{bmatrix} q_{1}^{1} & q_{2}^{1} & \cdots & \widehat{q_{r}^{1}} & \cdots & q_{d}^{1} \\[1.0ex]
    & & \vdots & & \\[1.0ex]
    q_{1}^{m-1} & q_{2}^{m-1} & \cdots & \widehat{q_{r}^{m-1}} & \cdots & q_{d}^{m-1} \\[1.0ex]
    q_{1}^{m+1} & q_{2}^{m+1} & \cdots & \widehat{q_{r}^{m+1}} & \cdots & q_{d}^{m+1} \\[1.0ex]
    & & \vdots & & \\[1.0ex]
    q_{1}^{d} & q_{2}^{d} & \cdots & \widehat{q_{r}^{d}} & \cdots & q_{d}^{d}
    \end{bmatrix}  \right) + \cdots \\[1.0ex]
    &\left. +(-1)^{d+1} \bm{e}_{d} \det \left( \begin{bmatrix} q_{1}^{1} & q_{2}^{1} & \cdots & \widehat{q_{r}^{1}} & \cdots & q_{d}^{1} \\[1.0ex]
    q_{1}^{2} & q_{2}^{2} & \cdots & \widehat{q_{r}^{2}} & \cdots & q_{d}^{2} \\[1.0ex]
    & & \vdots & & \\[1.0ex]
    q_{1}^{m} & q_{2}^{m} & \cdots & \widehat{q_{r}^{m}} & \cdots & q_{d}^{m} \\[1.0ex]
    & & \vdots & & \\[1.0ex]
    q_{1}^{d-1} & q_{2}^{d-1} & \cdots & \widehat{q_{r}^{d-1}} & \cdots & q_{d}^{d-1}
    \end{bmatrix}  \right) \right),
\end{align*}
where $\bm{n}(r)$ is a vector whose magnitude equals the hypervolume of the $(d-1)$-parallelepiped spanned by $\bm{q}_1, \bm{q}_2, \ldots, \widehat{\bm{q}_r}, \ldots, \bm{q}_d$, the hat  $\widehat{\cdot}$ denotes omission, and each $\bm{e}_{m} \in \mathbb{R}^d$ is a vector with 1 in the $m$-th entry and zeros elsewhere. Each component of $\bm{n}(r)$ can be written as follows
\begin{align}
n(r)^{m} = \delta^{a_r, m} \epsilon_{a_1, a_2,\dots, a_d}  q_{1}^{a_1} q_{2}^{a_2} \dots \widehat{q_{r}^{a_r}} \dots q_d^{a_d}, \label{normal_vector_pre}
\end{align}
where $\delta$ denotes the Kronecker delta.
Eq.~\eqref{normal_vector_pre} holds because $\bm{n}(r)$ is orthogonal by construction to all $\bm{q}_i$ and
\begin{align}
| \bm{n}(r)| &=\frac{|\bm{n}(r)| ^2 }{| \bm{n}(r)|} =  \frac{n(r)^{m} \, n(r)^{m}}{| \bm{n}(r)|}  \\
\nonumber &= \epsilon_{a_1, a_2, \dots, a_{r-1},  m, a_{r+1},  \dots, a_d} q_1^{a_1} q_2^{a_2} \dots q_{r-1}^{a_{r-1}} \frac{n(r)^{m}}{| \bm{n}(r)|} q_{r+1}^{a_{r+1} } \dots q_{d}^{a_d},
\end{align}
is the hypervolume of the $d$-parallelepiped spanned by $\bm{q}_1, \bm{q}_2, \ldots, \bm{q}_{r-1}, \frac{\bm{n}(r) }{| \bm{n}(r)|}, \bm{q}_{r+1}, \ldots, \bm{q}_{d}$, which is in turn equal to the hypervolume of the $d$-parallelepiped spanned by $\bm{q}_1, \bm{q}_2, \ldots, \bm{q}_{r-1}, \bm{q}_{r+1}, \ldots, \bm{q}_{d}$ by orthogonality. It follows that
\begin{align}
| \bm{n}(r)| = (d-1)! |\mathcal{F}_{r}|,
\end{align}
where $\mathcal{F}_r$ is the facet opposite $\bm{q}_r$.
We point out that this construction is not new. Indeed, for the reader that is familiar with Hodge theory, we note that the vectors $\bm{n}(r)$ are the dual vectors of the Hodge dual (the image of the Hodge star operator) of the exterior product of $\bm{q}_{1}^{*}, \dots, \widehat{\bm{q}_{r}^{*}} ,\dots , \bm{q}_{d}^{*}$, where $\bm{w}^{*}$ denotes the dual of $\bm{w}$, (i.e.~the covector of $\bm{w}$). A pedagogical introduction to the concept of Hodge duals appears in section 7.2 of~\cite{abraham2012manifolds}.

Let us return our attention to the vectors themselves: $\bm{q}_1, \bm{q}_2, \ldots, \widehat{\bm{q}_r}, \ldots, \bm{q}_d$. These vectors can be defined so that
\begin{align*}
    \bm{q}_1 &= \bm{p}_{K,1}-\bm{p}_{K,\ell} = \bm{p}_{K,\ell 1}, \\
    \bm{q}_2 &= \bm{p}_{K,2}-\bm{p}_{K,\ell} = \bm{p}_{K,\ell 2}, \\
    &\vdots \\
    \widehat{\bm{q}_r} &= \widehat{\bm{p}_{K,r}-\bm{p}_{K,\ell}} = \widehat{\bm{p}_{K,\ell r}} \\
    &\vdots \\
    \bm{q}_{\ell-1} & = \bm{p}_{K,\ell-1}-\bm{p}_{K,\ell} = \bm{p}_{K,\ell (\ell-1)},\\
    \bm{q}_{\ell} & = \bm{p}_{K,\ell+1}-\bm{p}_{K,\ell} = \bm{p}_{K,\ell (\ell+1)}, \\
    &\vdots \\
    \bm{q}_d &= \bm{p}_{K,d+1}-\bm{p}_{K,\ell} = \bm{p}_{K,\ell (d+1)}.
\end{align*}
Here, we have shifted each vertex by $\bm{p}_{K,\ell}$. Evidently, this ensures that the vertex $\bm{p}_{K,\ell}$ itself is shifted to the origin. As a result, we have that
\begin{align}
| \bm{n}(r)| = (d-1)! |\mathcal{F}_{K,r}|,
    \label{facet_volume_identity}
\end{align}
where $\mathcal{F}_{K,r}$ is the facet opposite the vertex $\bm{p}_{K,r}$. 

Next, in accordance with Eqs.~\eqref{cross_product_determinant}, \eqref{volume_identity}, and \eqref{normal_vector_pre}, we have that
\begin{align}
    \left| K \right| \leq \frac{1}{d!} \sum_{\substack{r=1\\ r\neq \ell}}^{d+1} \left( \left| p_{K,\ell r}^{m} \right| \left| n(r)^{m} \right| \right). \label{volume_identity_expanded_one}
\end{align}
Upon summing Eq.~\eqref{volume_identity_expanded_one} over all $\ell$, we obtain
\begin{align}
        \label{volume_identity_expanded_two} (d+1)\left| K \right| &\leq \frac{1}{d!} \sum_{\ell = 1}^{d+1} \sum_{\substack{r=1\\ r\neq \ell}}^{d+1} \left( \left| p_{K,\ell r}^{m} \right| \left| n(r)^{m} \right| \right) \\[1.0ex]
        \nonumber &\leq \frac{1}{d!} \sum_{\ell = 1}^{d+1} \sum_{\substack{r=1 \\ r\neq \ell}}^{d+1} \left( \left| p_{K,\ell r}^{m} \right| \left| \bm{n}(r) \right| \right) \\[1.0ex]
        \nonumber &= \frac{1}{d} \sum_{\ell = 1}^{d+1} \sum_{\substack{r=1 \\ r\neq \ell}}^{d+1} \left( \left| p_{K,\ell r}^{m} \right| \left| \mathcal{F}_{K,r} \right|  \right) \\[1.0ex]
        \nonumber &\leq \frac{1}{d} \left( \max_{s} |\mathcal{F}_{K,s}| \right) \sum_{\ell = 1}^{d+1} \sum_{\substack{r=1 \\ r\neq \ell}}^{d+1} \left| p_{K,\ell r}^{m} \right|, \qquad s \in [1, d+1],
\end{align}
where we have used Eq.~\eqref{facet_volume_identity} on the second-to-last line of Eq.~\eqref{volume_identity_expanded_two}.

We note that the following identity holds by a symmetry argument
\begin{align}
    \label{edge_identity}
    \frac{1}{2} \sum_{\ell = 1}^{d+1} \sum_{\substack{r=1 \\ r\neq \ell}}^{d+1}  \left| p_{K,\ell r}^{m} \right| =  \sum_{i=1}^{d+1} \sum_{j=1}^{i-1}  \left| p_{K,ij}^{m} \right|.
\end{align}
Substituting Eq.~\eqref{edge_identity} into Eq.~\eqref{volume_identity_expanded_two} yields
\begin{align}
    \frac{d(d+1)}{2} \frac{\left| K \right|}{ \left( \max_{s} |\mathcal{F}_{K,s}| \right)} \leq \sum_{i=1}^{d+1} \sum_{j=1}^{i-1}  \left| p_{K,ij}^{m} \right|.
    \label{first_clean_bound}
\end{align}
The ratio $|K|/\max_{s}|\mathcal{F}_{K,s}|$ on the left hand side (above) can be rewritten in terms of the minimum elevation of the simplex. In particular
\begin{align}
    \frac{\left| K \right|}{ \left( \max_{s} |\mathcal{F}_{K,s}| \right)} = \frac{\min_{s} \left[\mathrm{dist}\left(\bm{p}_{K,s}, \mathrm{aff}(\mathcal{F}_{K,s})\right)\right]}{d}, 
    \label{elevation_identity}
\end{align}
where the function $\mathrm{dist}(\cdot,\cdot)$ returns the shortest distance between the vertex $\bm{p}_{K,s}$ and the affine hull of its opposite facet, $\mathrm{aff}(\mathcal{F}_{K,s})$. We can substitute Eq.~\eqref{elevation_identity} into Eq.~\eqref{first_clean_bound} as follows
\begin{align}
    \frac{(d+1)}{2} \min_{s} \left[\mathrm{dist}\left(\bm{p}_{K,s}, \mathrm{aff}(\mathcal{F}_{K,s})\right)\right] \leq \sum_{i=1}^{d+1} \sum_{j=1}^{i-1}  \left| p_{K,ij}^{m} \right|.
\end{align}
Next, upon squaring both sides of the expression above
\begin{align}
    \left(\frac{(d+1)}{2} \min_{s} \left[\mathrm{dist}\left(\bm{p}_{K,s}, \mathrm{aff}(\mathcal{F}_{K,s})\right)\right]\right)^2 \leq \left(\sum_{i=1}^{d+1} \sum_{j=1}^{i-1}  \left| p_{K,ij}^{m} \right|\right)^2 \leq \frac{d(d+1)}{2} \sum_{i=1}^{d+1} \sum_{j=1}^{i-1}  \left( p_{K,ij}^{m} \right)^2. \label{simple_volume_one}
\end{align}
Here, we have leveraged the root-mean-square-arithmetic-mean inequality in order to reformulate the right hand side.

Equivalently, upon simplifying Eq.~\eqref{simple_volume_one}, we obtain
\begin{align}
   \frac{d+1}{2d} \left( \min_{s} \left[\mathrm{dist}\left(\bm{p}_{K,s}, \mathrm{aff}(\mathcal{F}_{K,s})\right)\right] \right)^2 \leq  \sum_{i=1}^{d+1} \sum_{j=1}^{i-1}  \left( p_{K,ij}^{m} \right)^2. \label{simple_volume_two}
\end{align}
Finally, substituting Eq.~\eqref{simple_volume_two} into Eq.~\eqref{iso_lower_two}, and setting $h_K = \Delta(K)$, yields
\begin{align}
    \label{iso_lower_three}
      \left(\Psi_{\mathcal{T}}\left(\nabla v\right)\right)^{2} &\geq \sum_{K \in \mathcal{T}} \frac{1}{\Delta(K)^{2}} \int_{K} \min_{m} \left(\sum_{i=1}^{d+1} \sum_{j=1}^{i-1} \left( p_{K,ij}^{m} \right)^2 \right) \left( \nabla v\cdot \nabla v \right) dV \\[1.0ex]
    &\geq \nonumber \frac{d+1}{2d} \sum_{K \in \mathcal{T}} \frac{1}{\Delta(K)^{2}} \int_{K} \min_{m} \left( \min_{s} \left[\mathrm{dist}\left(\bm{p}_{K,s}, \mathrm{aff}(\mathcal{F}_{K,s})\right)\right] \right)^2 \left( \nabla v\cdot \nabla v \right) dV 
     \\[1.0ex]
    &\geq \nonumber \frac{d+1}{2d} \min_{K} \left( \frac{\min_{s} \left[\mathrm{dist}\left(\bm{p}_{K,s}, \mathrm{aff}(\mathcal{F}_{K,s})\right)\right]}{\Delta(K)}  \right)^2 \sum_{K \in \mathcal{T}} \int_{K} (\nabla v \cdot \nabla v) \, dV  \\[1.0ex]
    &= \nonumber \frac{d+1}{2d}  \min_{K} \left( \frac{\min_{s} \left[\mathrm{dist}\left(\bm{p}_{K,s}, \mathrm{aff}(\mathcal{F}_{K,s})\right)\right]}{\Delta(K)} \right)^2 \left\| \nabla v \right\|_{L^{2}(\Omega)}^{2}.
\end{align}
Upon setting
\begin{align}
    \label{sliver_constant_orig}
    C_{1} \equiv \sqrt{\frac{d+1}{2d}} \min_{K} \left( \frac{\min_{s} \left[\mathrm{dist}\left(\bm{p}_{K,s}, \mathrm{aff}(\mathcal{F}_{K,s})\right)\right]}{\Delta(K)} \right),
\end{align}
in Eq.~\eqref{iso_lower_three}, and taking the square root of both sides, we obtain the desired lower bound for $\Psi_{\mathcal{T}}\left(\nabla v\right)$.

Next, we will construct the upper bound for the roughness functional and identify the constant $C_2$. From the definition of the  functional, we have
\begin{align}
    \label{iso_upper_one}
    \left(\Psi_{\mathcal{T}}\left(\nabla v\right)\right)^{2} &= \sum_{K \in \mathcal{T}}  \frac{1}{h_{K}^{2}} \int_{K} \sum_{i=1}^{d+1} \sum_{j=1}^{i-1}\left(\mathrm{abs}(\nabla v)\cdot \mathrm{abs}(\bm{p}_{K,ij}) \right)^{2} dV \\[1.0ex]
    \nonumber &\leq \sum_{K \in \mathcal{T}}  \frac{1}{h_{K}^{2}} \int_{K} \sum_{i=1}^{d+1} \sum_{j=1}^{i-1}\left(\mathrm{abs}(\nabla v)\cdot \mathrm{abs}(\nabla v) \right)\left(\mathrm{abs}(\bm{p}_{K,ij})\cdot \mathrm{abs}(\bm{p}_{K,ij}) \right) dV
    \\[1.0ex]
    \nonumber &= \sum_{K \in \mathcal{T}}  \frac{1}{h_{K}^{2}} \int_{K} \left(\nabla v\cdot \nabla v \right)\sum_{i=1}^{d+1} \sum_{j=1}^{i-1}\left(\bm{p}_{K,ij}\cdot \bm{p}_{K,ij} \right) dV
    \\[1.0ex]
    \nonumber &= \sum_{K \in \mathcal{T}}  \frac{1}{h_{K}^{2}} \sum_{i=1}^{d+1} \sum_{j=1}^{i-1}\left(\bm{p}_{K,ij}\cdot \bm{p}_{K,ij} \right) \int_{K} \left(\nabla v\cdot \nabla v \right) dV
    \\[1.0ex]
    \nonumber &\leq \max_{K} \left( \frac{1}{h_{K}^{2}} \sum_{i=1}^{d+1} \sum_{j=1}^{i-1}\left(\bm{p}_{K,ij}\cdot \bm{p}_{K,ij} \right) \right) \sum_{K \in \mathcal{T}} \int_{K} \left(\nabla v\cdot \nabla v \right) dV
    \\[1.0ex]
    \nonumber &= \max_{K} \left( \frac{1}{\Delta(K)^{2}} \sum_{i=1}^{d+1} \sum_{j=1}^{i-1}\left(\bm{p}_{K,ij}\cdot \bm{p}_{K,ij} \right) \right) \left\| \nabla v \right\|_{L_{2}(\Omega)}^{2},
\end{align}
where the Cauchy-Schwarz inequality has been used on the second line, and $h_K = \Delta(K)$ has been used on the last line. Upon setting
\begin{align}
    C_{2} \equiv \max_{K} \left( \frac{1}{\Delta(K)} \sqrt{\sum_{i=1}^{d+1} \sum_{j=1}^{i-1}\left(\bm{p}_{K,ij}\cdot \bm{p}_{K,ij} \right) }\right),
    \label{coarse_constant_orig}
\end{align}
in Eq.~\eqref{iso_upper_one} and taking the square root of both sides, we obtain the desired upper bound for $\Psi_{\mathcal{T}}\left(\nabla v\right)$.
\end{proof}

\subsection{Proof of Lemma~\ref{iso_upper_bound_lemma}}
\label{iso_upper_bound_lemma_proof}

\begin{proof}
    Consider the definition of the roughness functional
    \begin{align}
    \label{iso_upper_two}
        \left(\Psi_{\mathcal{T}}\left(\nabla v\right)\right)^{2} &= 
         \sum_{K \in \mathcal{T}} \frac{1}{h_{K}^{2}} \int_{K} \sum_{i=1}^{d+1} \sum_{j=1}^{i-1}\left(\mathrm{abs}(\nabla v)\cdot \mathrm{abs}(\bm{p}_{K,ij}) \right)^{2} dV
        \\[1.0ex]
    \nonumber &\leq \sum_{K \in \mathcal{T}} \frac{1}{h_{K}^{2}}\sum_{i=1}^{d+1} \sum_{j=1}^{i-1}\left(\bm{p}_{K,ij}\cdot \bm{p}_{K,ij} \right) \int_{K} \left(\nabla v\cdot \nabla v \right) dV
        \\[1.0ex]
        \nonumber &\leq \sum_{K \in \mathcal{T}} \frac{1}{h_{K}^{2}} \sum_{i=1}^{d+1} \sum_{j=1}^{i-1}\left(\bm{p}_{K,ij}\cdot \bm{p}_{K,ij} \right) |K| \left\|\nabla v\cdot \nabla v \right\|_{L_{\infty}(K)} 
        \\[1.0ex]
        \nonumber &\leq \sum_{K \in \mathcal{T}} \left( \frac{1}{h_{K}^{2}} \sum_{i=1}^{d+1} \sum_{j=1}^{i-1}\left(\bm{p}_{K,ij}\cdot \bm{p}_{K,ij} \right) |K| \right) \left\|\nabla v\cdot \nabla v \right\|_{L_{\infty}(\Omega)}
        \\[1.0ex]
        \nonumber &= \sum_{K \in \mathcal{T}} \left( \frac{1}{\Delta(K)^{2}} \sum_{i=1}^{d+1} \sum_{j=1}^{i-1}\left(\bm{p}_{K,ij}\cdot \bm{p}_{K,ij} \right) |K| \right) \left\|\nabla v \right\|_{L_{\infty}(\Omega)}^{2},
    \end{align}
    where we have set $h_K = \Delta(K)$ on the last line. It remains for us to analyze the term 
    \begin{align*}
    \frac{1}{\Delta(K)^2},
    \end{align*}
    that appears on the right hand side (above). 

    In accordance with Eq.~\eqref{size_constraint_two}, we have that
    \begin{align}
        \label{length_scale_one}
         \frac{1}{\Delta(K)^{2}} = \mathcal{L}_{[1/\mathcal{D}^2]} (\bm{x}_{K}) - \zeta_{K} &=  \mathcal{L}_{[1/\mathcal{D}^2]} (\bm{x}_{K}) - \frac{1}{\mathcal{D}^{2}(\bm{x}_K)} + \frac{1}{\mathcal{D}^{2}(\bm{x}_K)} -  \zeta_{K} \\[1.0ex]
         \nonumber &\leq \left| \mathcal{L}_{[1/\mathcal{D}^2]} (\bm{x}_{K}) - \frac{1}{\mathcal{D}^{2}(\bm{x}_K)} \right| + \left| \frac{1}{\mathcal{D}^{2}(\bm{x}_K)} \right| + \left| \zeta_{K} \right| \\[1.0ex]
         \nonumber &\leq \left\| \mathcal{L}_{[1/\mathcal{D}^2]}  - \frac{1}{\mathcal{D}^{2}} \right\|_{L_{\infty}(K)} + \left\| \frac{1}{\mathcal{D}^{2}} \right\|_{L_{\infty}(K)} + \left| \zeta_{K} \right|.
    \end{align}
    The first term on the right hand side of Eq.~\eqref{length_scale_one} can be bounded as follows (see Waldron~\cite{waldron1998error}, Theorem 3.1)
    \begin{align}
        \label{length_scale_two}
        \left\| \mathcal{L}_{[1/\mathcal{D}^2]} - \frac{1}{\mathcal{D}^{2}} \right\|_{L_{\infty}(K)} \leq \frac{1}{2} R_{K,\mathrm{min}}^{2} \left\| \frac{1}{\mathcal{D}^{2}} \right\|_{2,L_{\infty}(K)},
    \end{align}
    where $R_{K,\mathrm{min}}$ is the min-containment radius of $K$,
    \begin{align*}
        \left\| \frac{1}{\mathcal{D}^{2}} \right\|_{2,L_{\infty}(K)} \equiv \left\| \, \left| \frac{\partial^{2}}{\partial \bm{u}^2} \left( \frac{1}{\mathcal{D}^{2}}\right) \right| \, \right\|_{L_{\infty}(K)},
    \end{align*}
    and where
    \begin{align*}
        \left| \frac{\partial^{2}}{\partial \bm{u}^2} \left( \frac{1}{\mathcal{D}^{2}}\right) \right|(\bm{x}) \equiv \sup_{\substack{\bm{u}_{1}, \bm{u}_{2} \in \mathbb{R}^d \\ \left\| \bm{u}_{i} \right\| \leq 1}} \left| \frac{\partial}{\partial \bm{u}_{1}} \frac{\partial}{\partial \bm{u}_{2}} \left( \frac{1}{\mathcal{D}^{2}(\bm{x})}\right) \right|.
    \end{align*}
    We may now substitute Eq.~\eqref{length_scale_two} into Eq.~\eqref{length_scale_one} in order to obtain
    \begin{align}
        \frac{1}{\Delta(K)^{2}} \leq \frac{1}{2} R_{K,\mathrm{min}}^{2} \left\| \frac{1}{\mathcal{D}^{2}} \right\|_{2,L_{\infty}(K)} + \left\| \frac{1}{\mathcal{D}^{2}} \right\|_{L_{\infty}(K)} + \left| \zeta_{K} \right|.
    \end{align}
    Evidently, we can immediately generalize this result to the entire mesh
    \begin{align}
        \max_{K} \left(\frac{1}{\Delta(K)^{2}} \right) \leq \frac{1}{2} \max_{K} \left( R_{K,\mathrm{min}}\right)^{2}  \left\| \frac{1}{\mathcal{D}^{2}} \right\|_{2,L_{\infty}(\Omega)} + \left\| \frac{1}{\mathcal{D}^{2}} \right\|_{L_{\infty}(\Omega)} + \max_{K} \left| \zeta_{K} \right|. \label{max_upper_bound_diam}
    \end{align}
    Interestingly enough, because the vertices of our mesh are an $(\varepsilon,\overline{\eta})$-net, we also have that
    \begin{align}
        \max_{K} \left(\frac{1}{\Delta(K)^{2}} \right) \leq \frac{1}{\eta^2}.
        \label{max_upper_bound_aux}
    \end{align}
    We now return our attention to Eq.~\eqref{iso_upper_two}. Upon substituting Eqs.~\eqref{max_upper_bound_diam} and \eqref{max_upper_bound_aux} into Eq.~\eqref{iso_upper_two}, one obtains
    \begin{align}
        \label{iso_upper_three}\left(\Psi_{\mathcal{T}}\left(\nabla v\right)\right)^{2} &\leq \min\left[\frac{1}{2} \max_{K} \left( R_{K,\mathrm{min}}\right)^{2}  \left\| \frac{1}{\mathcal{D}^{2}} \right\|_{2,L_{\infty}(\Omega)} + \left\| \frac{1}{\mathcal{D}^{2}} \right\|_{L_{\infty}(\Omega)} + \max_{K} \left| \zeta_{K} \right|, \frac{1}{\eta^2} \right] \\[1.0ex]
        \nonumber &\times \sum_{K \in \mathcal{T}} \left( \sum_{i=1}^{d+1} \sum_{j=1}^{i-1}\left(\bm{p}_{K,ij}\cdot \bm{p}_{K,ij} \right) |K| \right) \left\|\nabla v \right\|_{L_{\infty}(\Omega)}^{2}. 
    \end{align}
    Finally, on setting 
    \begin{align}
        C_{3} &\equiv \sqrt{\min \left[\frac{1}{2} \max_{K} \left( R_{K,\mathrm{min}}\right)^{2}  \left\| \frac{1}{\mathcal{D}^{2}} \right\|_{2,L_{\infty}(\Omega)} + \left\| \frac{1}{\mathcal{D}^{2}} \right\|_{L_{\infty}(\Omega)} + \max_{K} \left| \zeta_{K} \right| , \frac{1}{\eta^2} \right]}, \\[1.0ex]
        \Theta &\equiv \sum_{K \in \mathcal{T}} \left(\sum_{i=1}^{d+1} \sum_{j=1}^{i-1}\left(\bm{p}_{K,ij}\cdot \bm{p}_{K,ij} \right) |K| \right),
    \end{align}
    in Eq.~\eqref{iso_upper_three} and taking the square root of both sides, one obtains the desired result. 
\end{proof}

\subsection{Proof of Theorem~\ref{functional_error_estimate_lemma}}
\label{functional_error_estimate_lemma_proof}

\begin{proof}
    We start by employing the Triangle inequality as follows
    \begin{align}
        \label{iso_error_one}
        \left\| \frac{\partial v}{\partial x_i} - \left(\frac{\partial v}{\partial x_i} \right)_h \right\|_{L_{\infty}(K)} &= \left\| \frac{\partial v}{\partial x_i} - \left(\frac{\partial v}{\partial x_i} \right)^{\ast} + \left(\frac{\partial v}{\partial x_i} \right)^{\ast} - \left(\frac{\partial v}{\partial x_i} \right)_h \right\|_{L_{\infty}(K)} \\[1.0ex]
        \nonumber &\leq \left\| \frac{\partial v}{\partial x_i} - \left(\frac{\partial v}{\partial x_i} \right)^{\ast} \right\|_{L_{\infty}(K)} + \left\| \left(\frac{\partial v}{\partial x_i} \right)^{\ast} - \left(\frac{\partial v}{\partial x_i} \right)_h \right\|_{L_{\infty}(K)}.
    \end{align}
    Here, the quantity
    \begin{align*}
         \left(\frac{\partial v}{\partial x_i} \right)^{\ast}
    \end{align*}
    is the `best approximation' of the gradient, (see~\cite[chapter 10]{trefethen2019approximation} for details). Note that $\left(\frac{\partial v}{\partial x_i} \right)^{\ast}$ and $\left(\frac{\partial v}{\partial x_i} \right)_h$ reside in the same piecewise polynomial space, and that $\left(\frac{\partial v}{\partial x_i} \right)^{\ast}$ minimizes the error in the infinity norm.
    
    In accordance with classical interpolation theory, see for example~\cite[Theorem 15.1]{trefethen2019approximation}, or~\cite[pg.~47]{hesthaven2007nodal}, the following inequality holds
    \begin{align}
        \label{iso_error_two}
        \left\| \left(\frac{\partial v}{\partial x_i} \right)^{\ast} - \left(\frac{\partial v}{\partial x_i} \right)_h \right\|_{L_{\infty}(K)} \leq \Lambda \left\| \frac{\partial v}{\partial x_i} - \left(\frac{\partial v}{\partial x_i} \right)^{\ast} \right\|_{L_{\infty}(K)},
    \end{align}
    where
    \begin{align}
        \Lambda \equiv \max_{\bm{\xi}}\sum_{m=1}^{N_p}|L_m(\bm{\xi})|,
    \end{align}
    is the well-known Lebesgue constant. Upon substituting Eq.~\eqref{iso_error_two} into Eq.~\eqref{iso_error_one}, we obtain
    \begin{align}
        \label{iso_error_three}
        \left\| \frac{\partial v}{\partial x_i} - \left(\frac{\partial v}{\partial x_i} \right)_h \right\|_{L_{\infty}(K)} \leq (1+ \Lambda) \left\| \frac{\partial v}{\partial x_i} - \left(\frac{\partial v}{\partial x_i} \right)^{\ast} \right\|_{L_{\infty}(K)}.
    \end{align}
    Next, summing Eq.~\eqref{iso_error_three} over the number of dimensions $d$ yields
    \begin{align}
        \label{iso_error_four}
        \left\| \nabla v - (\nabla v)_h \right\|_{L_{\infty}(K)} \leq (1 + \Lambda)\left\| \nabla v - (\nabla v)^{\ast} \right\|_{L_{\infty}(K)}.
    \end{align}
    Evidently
    \begin{align}
        \label{iso_error_five}
        \left\| \nabla v - (\nabla v)_h \right\|_{L_{\infty}(\Omega)} \leq (1 + \Lambda)\left\| \nabla v - (\nabla v)^{\ast} \right\|_{L_{\infty}(\Omega)}.
    \end{align}
    Now, we can replace $\nabla v$ with $(\nabla v - (\nabla v)_h)$ in Eq.~\eqref{iso_upper_bound} from Lemma~\ref{iso_upper_bound_lemma}, as follows
    \begin{align}
        \Psi_{\mathcal{T}}\left(\nabla v - (\nabla v)_h\right) \leq  C_3 \sqrt{\Theta} \left\| \nabla v - (\nabla v)_h \right\|_{L_{\infty}(\Omega)}
        \label{upper_bound_pre}
    \end{align}
    Upon substituting Eq.~\eqref{iso_error_five} into the right hand side of Eq.~\eqref{upper_bound_pre}, we obtain the desired result.
\end{proof}

\end{document}